\DeclareMathAlphabet{\pazocal}{OMS}{zplm}{m}{n}
\newtheorem{theorem}{Theorem}
\newtheorem{corollary}[theorem]{Corollary}
\newtheorem{lemma}[theorem]{Lemma}
\newtheorem{remark}[theorem]{Remark}
\numberwithin{theorem}{section}
\newenvironment{proof}[1][Proof]{\textbf{#1.} }{\ \rule{0.5em}{0.5em}}
\renewcommand{\d}[1]{\ensuremath{\operatorname{d}\!{#1}}}
\newsavebox{\overlongequation}
\begin{document}
\title{Lyapunov Exponents of Two Stochastic Lorenz 63 Systems
\\ \bigskip\Large
Bernard J. Geurts$^1$, Darryl D. Holm$^2$ and Erwin Luesink$^{2}$ 
\\ \bigskip\small
$^1$ Applied Mathematics, University of Twente, Enschede 7500 AE, NL\\
$^2$ Mathematics, Imperial College London SW7 2AZ, UK}
\date{}                                           

\maketitle

\makeatother

\begin{abstract}
Two different types of perturbations of the Lorenz 63 dynamical system for Rayleigh-B\'enard convection by multiplicative noise -- called stochastic advection by Lie transport (SALT) noise and fluctuation-dissipation (FD) noise -- are found to produce qualitatively different effects, possibly because the total phase-space volume contraction rates are different. In the process of making this comparison between effects of SALT and FD noise on the Lorenz 63 system, a stochastic version of a robust deterministic numerical algorithm for obtaining the individual numerical Lyapunov exponents was developed. With this stochastic version of the algorithm, the value of the sum of the Lyapunov exponents for the FD noise was found to differ significantly  from the value of the deterministic Lorenz 63 system, whereas the SALT noise preserves the Lorenz 63 value with high accuracy. The Lagrangian averaged version of the SALT equations (LA SALT) is found to yield a closed deterministic subsystem for the expected solutions which is found to be isomorphic to the original Lorenz 63 dynamical system. The solutions of the closed chaotic subsystem, in turn, drive a linear stochastic system for the fluctuations of the LA SALT solutions around their expected values.   
\end{abstract}

\section{Introduction}
A fundamental challenge in geophysical fluid dynamics (GFD) is the estimation of combined measurement error and model uncertainty. These errors can arise, for example, from unknown or neglected physical scales, incomplete information in the data and incomplete formulation of the theoretical models. The modern methodology of stochastic data-driven modelling has been developed for managing the loss of predictability  associated with error and uncertainty. A fundamental tenet of our approach to this methodology is that stochastic data-driven models should respect the physical principles underlying their deterministic counterparts. For example, if the deterministic physical model follows from Hamilton's variational principle, one may introduce stochastic data dependence as a constraint on the deterministic variational principle. Recently, \cite{holm2015variational} followed this approach for fluid dynamics by introducing Stochastic Advection by Lie Transport (SALT) as a constraint on Hamilton's principle for ideal fluid dynamics. The SALT variational structure preserves both the physical conservation laws and their mathematical implications, such as the Kelvin circulation theorem. Thus, data-driven stochastic fluid flow models based on the SALT approach will obey the same laws of physics as their corresponding deterministic models, although the time dependence for advection by fluid flow will become stochastic.
\bigskip

Now, the solutions of all GFD models of climate and weather tend to be chaotic in nature. That is, some aspects of their solutions tend to show sensitive dependence on initial conditions and on time dependence of forcing. For dynamical systems, this sensitive dynamical dependence is exhibited by positive Lyapunov exponents, meaning that initially nearby solutions diverge away from each other exponentially in time. In addition, GFD models tend to admit bifurcations of solution behaviour depending on their model parameters. Thus, for GFD one must take on the challenge of modelling measurement error and uncertainty which is also nonlinearly concatenated with chaotic deterministic dynamics, is therefore sensitive to initial conditions as well as forcing, and is prone to bifurcations. The most famous example of chaos in GFD is the chaotic dynamics on the strange attractor of the deterministic Lorenz 63 system \cite{lorenz1963deterministic}, which may be represented for $(X,Y,Z)^T\in\mathbb{R}^3$ as the dynamical system,
\begin{equation}
\begin{aligned}
\frac{d}{d\tau}X &= \sigma(Y-X),\\
\frac{d}{d\tau}Y &= (rX-XZ-Y),\\
\frac{d}{d\tau}Z &= (XY - bZ),
\end{aligned}
\label{Lorenz63}
\end{equation}
where $\sigma$, $r$, $b$, are real constants and $\tau$ is time. The physical meaning of the Lorenz 63 system \eqref{Lorenz63} and some of its main properties will be recalled below. The paper begins with a review of its derivation from the Rayleigh-B\'enard model of thermal convection of fluids heated from below which includes an explanation of how its deterministic derivation accommodates the transition to Stochastic Advection by Lie Transport (SALT).
\bigskip

{\bf Brief summary.} This paper aims to incorporate SALT into the Rayleigh-B\'enard model of thermal convection and then follow the standard historical approach to derive the stochastic Lorenz 63 system augmented by SALT. It also compares the dynamical simulation results for the numerical Lyapunov exponents (NLEs) of the SALT Lorenz 63 model with those of the stochastic Lorenz 63 system investigated in \cite{chekroun2011stochastic}.  
The individual NLEs of the two cases appear to be almost identical for each realisation of the noise. However, the sums are different, so the total phase-space volume contraction rates are different. Namely, the SALT approach preserves the phase-space contraction rate of the underlying deterministic Lorenz 63 system, whereas the stochastic Lorenz 63 system in \cite{chekroun2011stochastic} varies with each realisation. The implications of this difference in temporal behaviour of total contraction rates are elicited in numerical simulations. After a brief Conclusion section which summarises the main resutls, the paper's Outlook section briefly discusses a toy climate model based on the SALT Lorenz 63 system. In particular, the relations between `what you expect' and the dynamical evolution of the statistics in `what you get' are discussed in the climate context. 

\subsubsection*{Background}
In \cite{holm2015variational}, SALT was introduced by using Hamilton's variational principle for fluids, constrained to enforce stochastic Lagrangian fluid trajectories, arising as characteristic curves of the following stochastic Eulerian velocity vector field
\begin{equation}
v(x,t,dW) := u(x,t)dt + \sum_{i=1}^N \xi_i(x)\circ dW^i \,.
\label{decomposition}
\end{equation} 
The velocity vector field \eqref{decomposition} is regarded as a decomposition into a deterministic drift velocity $u(x,t)$ and a sum over spatially dependent but temporally constant vector field amplitudes of Stratonovich stochastic terms. The $\xi_i(x)$ are taken as stationary eigenvectors of the velocity-velocity correlation tensor. In practice, the $\xi_i(x)$ need to be supplied a priori. See \cite{cotter2018modelling, cotter2019numerically, cotter2019particle} for how the $\xi_i(x)$ are determined in practical applications. Stratonovich stochastic calculus is preferred in the derivation of the SALT equations, because its standard calculus properties (product rule and chain rule) also admit variational calculus. The decomposition \eqref{decomposition} was shown in \cite{cotter2017stochastic} to also arise from homogenisation of the velocity obtained by writing the deterministic time-dependent Lagrange-to-Euler diffeomorphism as the composition of two diffeomorphisms, a fluctuating map with two time scales $(t,t/\epsilon)$ where $\epsilon\ll 1$, applied to a mean flow map depending only on the slow time scale $t$.  In this analysis, the $\xi_i(x)$ should be understood as empirical orthogonal functions which correspond to the different modes of the fast flow. The rigorous application of homogenisation theory requires added assumptions of mildly chaotic fast small-scale deterministic dynamics, as well as a centering condition, according to which the mean of the fluctuating deviations is small, when pulled back to the mean flow. The homogenisation analysis performed in \cite{cotter2017stochastic} gives rise to It\^o noise, which is then transformed to the Stratonovich type.
\bigskip

The work of \cite{cotter2017stochastic} justified regarding the Eulerian vector field in \eqref{decomposition} as a genuine decomposition of the fluid velocity into a sum of drift and stochastic parts, rather than simply a perturbation of the dynamics meant to model unknown effects in uncertainty quantification. Consequently, one should expect that the properties of the fluid equations with SALT should closely track the properties of the unapproximated solutions of the fluid equations. For example, if the unapproximated model equations are Hamiltonian, then the model equations with SALT should also be Hamiltonian. This was shown in \cite{holm2015variational}. 
\bigskip

In \cite{cotter2018modelling} and \cite{cotter2019numerically}, respectively, a 2-layer quasi-geostrophic model and the 2D Euler equations were each perturbed using the SALT velocity decomposition in \eqref{decomposition}. These investigations were aimed at developing a new methodology for uncertainty quantification and data assimilation for ideal fluids. The new methodology tested in these papers was found to be suitable for coarse graining in both cases. Specifically, uncertainty quantification tests of the velocity decomposition of \cite{cotter2017stochastic} were performed by comparing ensembles of coarse-grid realisations of solutions of the resulting stochastic partial differential equations with the ``true solutions'' of the deterministic fluid partial differential equations obtained by computing the same problem at higher resolution. The time discretisations used for approximating the solution of the stochastic partial differential equations were shown to be consistent in each case and comprehensive numerical tests confirmed the non-Gaussianity of the flows under SALT dynamics.
\bigskip

In \cite{crisan2019solution} it is shown that the Euler equations for an ideal, inviscid fluid with SALT are locally well-posed in regular spaces and a Beale-Kato-Majda type blow-up criterion is proved. Thus the analytical properties of the 3D Euler fluid equations with SALT closely mimic the corresponding analytical properties of the original deterministic 3D Euler equations.
\bigskip

Inspired by spatiotemporal observations from satellites of the trajectories of objects drifting near the surface of the ocean in the National Oceanic and Atmospheric Administration's (NOAA) Global Drifter program, in \cite{gay2018stochastic} a data-driven stochastic model of geophysical fluid dynamics was developed. Here non-stationary spatial correlations represent the effects of advection by ocean currents. The methods in this paper are similar to those in \cite{holm2015variational}, where the models were derived using reduction by symmetry of stochastic variational principles. The corresponding momentum maps, conservation laws and Lie-Poisson bracket structures were used in developing the new stochastic Hamiltonian models of geophysical fluid dynamics with nonlinearly evolving stochastic properties. 

\bigskip

For motivation of the present work, we recall that the complex bifurcation behaviour shown in the Rayleigh-B\'enard model of thermally driven convection is quite challenging to simulate numerically, as discussed, e.g., in  \cite{kooij2018comparison, plumley2016effects, ostilla2013optimal, shishkina2007fourth}. One might hope that the SALT approach could play a useful role for investigating this convection model further. In particular, the large amount of available experimental and numerical data for thermally driven convection could, in principle, facilitate determination of the necessary correlation eigenvectors $\xi_i(x)$ in the decomposition of the fluid velocity  vector field above in equation \eqref{decomposition}. One would expect progress in this matter to produce stochastic models which are numerically much  less expensive than the corresponding deterministic models. In this paper, we shall derive the SALT modification of  the Rayleigh-B\'enard convection equations and then derive the corresponding Lorenz 63 model. We hope this study will stimulate future research in using SALT to develop efficient stochastic numerical simulations of Rayleigh-B\'enard convection. For example, the effects of stochasticity implemented by the SALT approach on the bifurcation structure of Rayleigh-B\'enard convection could be an avenue for further investigations of interest in fluid dynamics for both theory and applications. However, such an investigation is beyond the scope of the present paper. 

\subsubsection*{Plan of the paper}
In section 2 of this paper, SALT is introduced into the formulation of the Rayleigh-B\'enard convection, by using the methods from geometric mechanics \cite{holm2015variational}. In particular, the Kelvin circulation theorem is used to introduce SALT into the Oberbeck-Boussinesq (OB) equations which govern the Rayleigh-B\'enard model. In section 3, the OB equations with SALT are restricted to a vertical slice and by means of a truncated spatial Fourier series expansion are projected onto low wavenumber modes, following the approach of \cite{lorenz1963deterministic}. This process gives rise to a stochastic version of the Lorenz 63 system. In section 4, we compare this stochastic Lorenz 63 model derived using the SALT approach with an alternative stochastic Lorenz 63 model found in the literature \cite{chekroun2011stochastic}, in which linear multiplicative noise is added into the equation for each variable in \eqref{Lorenz63}. By using the theory of Lyapunov exponents and Oseledet's multiplicative ergodic theorem, analytical statements can be derived to describe the average rate of contraction of phase-space volume. In section 5, a numerical method to compute the numerical Lyapunov exponents (NLEs) is introduced, based on a QR method involving the Cayley-transform, as shown in \cite{udwadia2002efficient}. This method is generalised to deal with systems of stochastic differential equations and is shown to give accurate and robust results. Section  \ref{sec: conclude} summarises our conclusions. Section \ref{sec: outlook} briefly discusses an intriguing outlook for modifications of the present approach which could conceivably apply in climate science.

\section{Rayleigh-B\'enard convection and Lorenz 63}
In this paper, the goal is to look at the effect of SALT on the Lorenz 63 system in \eqref{Lorenz63}. The Lorenz 63 system can be systematically derived by expanding the equations of Rayleigh-B\'enard convection into Fourier series and then truncating. The Rayleigh-B\'enard model may be interpreted as a simple, local, weather system, where only heat and wind are involved. One of the important ingredients that is missing from the model is the effect of rotation, that gives rise to the Coriolis force and hence we can only think of this as a local model for weather effects. The domain is a three dimensional box, where the bottom plate is being heated and the top plated is being cooled at a constant temperature. This corresponds to the earth heating the surface of the earth, which heats the air directly above it. The hot air has a lower density than the cool air which is above it. Gravity acts to restore stability, causing the hot air to rise and the cool air to descend. The top plate is being cooled, corresponding to the cold air above the domain which acts as a heat sink. The fluid in the box is modelled with the Navier-Stokes equations under the Oberbeck-Boussinesq approximation. The fluid is assumed to have a constant heat capacity, so the advection-diffusion equation for the heat can be expressed in terms of temperature. The governing set of equations are
\begin{equation}
\begin{aligned}
\frac{\partial}{\partial t}\textbf{u} + \textbf{u}\cdot\nabla \textbf{u} &= -\nabla p + \nu \Delta \textbf{u} + \textbf{F} ,\\
\frac{\partial}{\partial t} T + \textbf{u}\cdot \nabla T &= \gamma\Delta T,\\
\nabla\cdot \textbf{u} &= 0.
\end{aligned}
\label{DOBeqns}
\end{equation}
The buoyancy force $\textbf{F} = \alpha g T \hat{\textbf{z}}$ in the top equation in \eqref{DOBeqns} acts in the vertical direction and depends on the thermal expansion coefficient $\alpha$, gravity $g$ and the local temperature $T$. This model is dissipative in nature due to the linear diffusion of momentum per unit mass $\textbf{u}$, by viscosity $\nu$, and heat per unit mass $T$, by heat diffusivity $\gamma$. The dissipation makes the dynamics irreversible, which does not fit the mathematical framework that is necessary to introduce SALT. At this stage, let us drop the dissipative terms temporarily, to illustrate the framework. Suppose $T$ is the heat per unit mass of a Lagrangian fluid parcel following the flow given by the smooth, invertible, time-dependent Lagrange-to-Euler map,
\begin{equation}
\eta_t X := \eta(X,t)\in\mathbb{R}^3, \qquad \text{for initial position}\quad x(X,0) = \eta_0 X = X.
\label{LagrangetoEulermap}
\end{equation}
By using the pullback operation, denoted by $\eta_t^*$, the temperature equation can be rewritten in terms of the Lagrange-to-Euler map \eqref{LagrangetoEulermap}. In particular, for the parcel occupying spatial position $x\in\mathbb{R}^3$ at time $t$, we have $\eta_t^* T(X,t) = T(\eta(X,t),t)$. The Eulerian velocity vector along a Lagrangian path may be written in terms of the flow $\eta_t$ and its pullback $\eta_t^*$ as
\begin{equation}
\frac{d\eta^j(X,t)}{dt} = u^j(\eta(X,t),t) = \eta^*_t u^j(X,t) = u^j(\eta_t^* X,t).
\label{pullbackTangent}
\end{equation}
The time derivative of the pullback of $\eta_t$ for the scalar function $T(X,t)$ is given by the chain rule as
\begin{equation}
\frac{d}{dt}\eta_t^* T(X,t) = \frac{d}{dt} T(\eta_t X,t) = \frac{\partial}{\partial t} T(\eta(X,t),t) + \frac{\partial T}{\partial \eta^j}\frac{\partial \eta^j(X,t)}{\partial t} = \eta^*_t\left(\frac{\partial}{\partial t}T + \frac{\partial T}{\partial x^j}u^j\right)(X,t).
\label{pullbackTemperature}
\end{equation}
Equation \eqref{pullbackTemperature} shows the geometric nature of advection of heat per unit mass along a Lagrangian particle path. A similar relation can be shown to exist for the momentum per unit mass $\textbf{u}$, although this relation involves an additional term, as we shall see below in the proof of Theorem \ref{Thm2.2}.  
\begin{remark}[Distinguishing between flow velocity and momentum per unit mass]
In the momentum equation in \eqref{DOBeqns}, two quantities with the dimensions of velocity appear, both noted as $\textbf{u}$. In Cartesian coordinates on $\mathbb{R}^n$ equipped with the Euclidean metric, when the kinetic energy is given by the $L^2$ metric, this is valid. In different metrics or different spaces, one has to distinguish between the contravariant vector field $u^j\partial_j$, which transports fluid properties and the covariant momentum per unit mass $u_j dx^j$, which is transported. To emphasize this difference, we will henceforth denote the vector field, or transport velocity, by $\widetilde{\textbf{u}}$ and write momentum per unit mass by $\textbf{u}$. Then equation \eqref{pullbackTangent} is written as
\begin{equation}
\frac{d\eta^j(X,t)}{dt} = \widetilde{u}^j(\eta(X,t),t) := \eta^*_t \widetilde{u}^j(X,t) = \widetilde{u}^j(\eta_t^* X,t),
\label{pullbackTangent2}
\end{equation}
and we can write the ideal Oberbeck-Boussinesq equations for the Rayleigh-B\'enard model as
\begin{equation}
\begin{aligned}
\frac{\partial}{\partial t}\textbf{u} + \widetilde{\textbf{u}}\cdot\nabla\textbf{u} &= -\nabla p + \textbf{F},\\
\frac{\partial}{\partial t}T + \widetilde{\textbf{u}}\cdot\nabla T &= 0,\\
\nabla\cdot\widetilde{\textbf{u}} &= 0.
\end{aligned}
\label{IOBeqns}
\end{equation}
The distinction between $\widetilde{\textbf{u}}$ and $\textbf{u}$ is particularly important in the Kelvin circulation theorem, where $\widetilde{\textbf{u}}$ transports the fluid loop and $\textbf{u}$ will be the circulation velocity, which is integrated around the loop.
\end{remark}

\begin{theorem}[Kelvin circulation theorem for the ideal Oberbeck-Boussinesq equations]\label{Thm2.2}
For the circulation integral
\begin{align*}
I(t) = \oint_{c(t)} \textbf{u}\cdot d\textbf{x},
\end{align*}
where $c(t)$ is a closed loop that is moving with velocity $\widetilde{\textbf{u}}$, the ideal Oberbeck-Boussinesq equations given in \eqref{IOBeqns} imply the following circulation dynamics,
\begin{align*}
\frac{d}{dt}I(t) = \oint_{c(t)} \alpha g T dz
\end{align*}
\end{theorem}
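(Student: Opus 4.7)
The plan is to use the Lagrangian transport theorem for one-forms to convert $dI/dt$ into an Eulerian loop integral, then substitute the momentum equation from \eqref{IOBeqns} and observe that the non-buoyancy terms are exact differentials.

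First I would parametrise the moving loop by pullback along $\eta_t$, writing $c(t) = \eta_t c(0)$ and $I(t) = \oint_{c(0)} \eta_t^*(\mathbf{u}\cdot d\mathbf{x})$. Differentiating under the integral and invoking the analogue of \eqref{pullbackTemperature} for a one-form (the dynamic definition of the Lie derivative) gives
\begin{equation*}
\frac{d}{dt}I(t) = \oint_{c(t)} \left(\frac{\partial \mathbf{u}}{\partial t}\cdot d\mathbf{x} + \mathcal{L}_{\widetilde{\mathbf{u}}}(\mathbf{u}\cdot d\mathbf{x})\right).
\end{equation*}
In index form the Lie derivative of the one-form $u_i\,dx^i$ along the vector field $\widetilde{u}^j\partial_j$ equals $(\widetilde{u}^j \partial_j u_i + u_j \partial_i \widetilde{u}^j)\,dx^i$, so that the integrand becomes $(\partial_t u_i + \widetilde{u}^j\partial_j u_i)\,dx^i + u_j \partial_i \widetilde{u}^j\,dx^i$.

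Next I would insert the ideal momentum equation $\partial_t \mathbf{u} + \widetilde{\mathbf{u}}\cdot\nabla\mathbf{u} = -\nabla p + \mathbf{F}$ from \eqref{IOBeqns} to obtain
\begin{equation*}
\frac{d}{dt}I(t) = \oint_{c(t)} \left(-\nabla p + \mathbf{F}\right)\cdot d\mathbf{x} + \oint_{c(t)} u_j\,\partial_i \widetilde{u}^j\,dx^i.
\end{equation*}
In the Cartesian Euclidean setting of Remark~2.1 we have $u_j = \widetilde{u}^j$, so $u_j\,\partial_i \widetilde{u}^j = \tfrac{1}{2}\partial_i |\widetilde{\mathbf{u}}|^2$, which is the gradient of a scalar. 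Together with $-\nabla p$, this contributes an exact one-form whose integral around the closed loop $c(t)$ vanishes. What remains is $\oint_{c(t)} \mathbf{F}\cdot d\mathbf{x}$, and using $\mathbf{F} = \alpha g T \hat{\mathbf{z}}$ gives exactly $\oint_{c(t)} \alpha g T\,dz$.

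The step most likely to cause trouble is bookkeeping the Lie derivative of the momentum one-form correctly, in particular recognising the extra $u_j \partial_i \widetilde{u}^j$ term and checking that it really is exact under the Euclidean identification $u_j = \widetilde{u}^j$; everything else is transport theorem plus substitution. The divergence-free condition $\nabla\cdot\widetilde{\mathbf{u}}=0$ plays no explicit role here, since the buoyancy contribution is the only non-gradient part of the Eulerian acceleration, and the temperature equation is not used at this stage.
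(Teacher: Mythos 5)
Your proposal is correct and follows essentially the same route as the paper's proof: pull the circulation one-form back along $\eta_t$, differentiate to produce the transport (Lie-derivative) term $u_j\nabla\widetilde{u}^j\cdot d\mathbf{x}$, substitute the ideal momentum equation, use the Euclidean identification to recognise that term together with $-\nabla p$ as an exact differential that vanishes around the closed loop, leaving the buoyancy contribution $\oint_{c(t)}\alpha g T\,dz$. Your sign bookkeeping of the $u_j\,\partial_i\widetilde{u}^j$ term is in fact cleaner than the displayed computation in the paper, though the discrepancy is immaterial since the term is exact either way.
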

\begin{proof}
\begin{align*}
\frac{d}{dt}I(t) &= \frac{d}{dt} \oint_{c(t)} \textbf{u}\cdot d\textbf{x} = \oint_{c(0)}\frac{d}{dt}\eta_t^*(\textbf{u}\cdot d\textbf{x})\\
&=  \oint_{c(0)}\eta^*_t\left(\left(\frac{\partial}{\partial t}\textbf{u} + \widetilde{\textbf{u}}\cdot\nabla\textbf{u} + u_j\nabla \widetilde{u}^j\right)\cdot d\textbf{x}\right)\\
&=  \oint_{c(t)}\left(\frac{\partial}{\partial t}\textbf{u} + \widetilde{\textbf{u}}\cdot\nabla\textbf{u} + u_j\nabla \widetilde{u}^j\right)\cdot d\textbf{x}\\
&= \oint_{c(t)}\left(-\nabla p - u_j\nabla\widetilde{u}^j + \alpha g T\nabla z\right)\cdot d \textbf{x}
\end{align*}
Since the space that we are working in is $\mathbb{R}^3$, with Cartesian coordinates, an Euclidean metric and the kinetic energy is given by the $L^2$ metric, $\textbf{u}_j$ and $\textbf{u}^j$ can be identified as vectors. The identity $u_j\nabla u^j = \frac{1}{2}\nabla|\textbf{u}|^2$ is then valid and can be used to find
\begin{align*}
\frac{d}{dt}I(t) &= \oint_{c(t)}\left(-\nabla\left( p + \frac{1}{2}|\textbf{u}|^2\right) + \alpha g T\nabla z\right)\cdot d\textbf{x}\\
&= \oint_{c(t)}-d\left( p + \frac{1}{2}|\textbf{u}|^2\right) + \left(\alpha g T\right)dz\\
&=\oint_{c(t)} \alpha g T dz\,.
\end{align*}
In the last step we have used the fundamental theorem of calculus.
\end{proof}
\bigskip

The dissipative Oberbeck-Boussinesq equations \eqref{DOBeqns} possess the following circulation dynamics.
\begin{corollary}[Kelvin circulation theorem for the dissipative Oberbeck-Boussinesq equations]
The dissipative Oberbeck-Boussinesq equations \eqref{DOBeqns} possess the following circulation dynamics,
\begin{align}
\frac{d}{dt}I(t) = \oint_{c(t)} (\alpha g T\nabla z + \nu\Delta\textbf{u})\cdot d\textbf{x}\,,
\label{KCTheorem}
\end{align}
for the circulation integral given by
\begin{align*}
I(t) = \oint_{c(t)} \textbf{u}\cdot d\textbf{x}\,,
\end{align*}
where $c(t)$ is a closed loop moving with transport velocity $\widetilde{\textbf{u}}$.
\end{corollary}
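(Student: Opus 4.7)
The plan is to repeat the pullback/Lie-derivative computation from the proof of the theorem verbatim, and then substitute the dissipative momentum equation \eqref{DOBeqns} in place of the ideal one \eqref{IOBeqns}. Concretely, I would start from
\begin{align*}
\frac{d}{dt}I(t) = \oint_{c(0)}\eta_t^*\left(\left(\frac{\partial}{\partial t}\textbf{u} + \widetilde{\textbf{u}}\cdot\nabla\textbf{u} + u_j\nabla\widetilde{u}^j\right)\cdot d\textbf{x}\right) = \oint_{c(t)}\left(\frac{\partial}{\partial t}\textbf{u} + \widetilde{\textbf{u}}\cdot\nabla\textbf{u} + u_j\nabla\widetilde{u}^j\right)\cdot d\textbf{x},
\end{align*}
which is exactly the expression that was derived in the preceding theorem (it depends only on the geometric identity for the time derivative of the pullback of a one-form, not on which momentum equation we impose). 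Since we are working in $\mathbb{R}^3$ with Cartesian coordinates and the Euclidean metric, the distinction between $\widetilde{\textbf{u}}$ and $\textbf{u}$ is purely notational, so $\textbf{u}\cdot\nabla\textbf{u}$ in \eqref{DOBeqns} agrees with $\widetilde{\textbf{u}}\cdot\nabla\textbf{u}$ used above.

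Next, I would use the first equation of \eqref{DOBeqns} to substitute
\begin{align*}
\frac{\partial}{\partial t}\textbf{u} + \widetilde{\textbf{u}}\cdot\nabla\textbf{u} = -\nabla p + \nu\Delta\textbf{u} + \alpha g T\hat{\textbf{z}},
\end{align*}
yielding
\begin{align*}
\frac{d}{dt}I(t) = \oint_{c(t)}\left(-\nabla p + \nu\Delta\textbf{u} + \alpha g T\nabla z - u_j\nabla\widetilde{u}^j\right)\cdot d\textbf{x}.
\end{align*}
Then I would invoke the same identity used in the theorem, $u_j\nabla\widetilde{u}^j = \tfrac{1}{2}\nabla|\textbf{u}|^2$, to recognise that the pressure term and the $u_j\nabla\widetilde{u}^j$ term combine into a single exact one-form $-d(p+\tfrac{1}{2}|\textbf{u}|^2)$, and I would also write $\alpha g T\hat{\textbf{z}}\cdot d\textbf{x} = \alpha g T\,dz$.

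Finally, the fundamental theorem of calculus applied to the closed loop $c(t)$ kills all exact-form contributions, leaving
\begin{align*}
\frac{d}{dt}I(t) = \oint_{c(t)}\left(\alpha g T\nabla z + \nu\Delta\textbf{u}\right)\cdot d\textbf{x},
\end{align*}
which is the desired circulation dynamics. There is no real obstacle here: the viscous term $\nu\Delta\textbf{u}$ is neither a gradient nor absorbed by the Lie-transport machinery, so it simply survives as an extra integrand along the material loop, and the structure of the argument is identical to that of the ideal case with one additional term carried through.
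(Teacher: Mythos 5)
Your proposal is correct and is exactly the argument the paper intends (the corollary is left as ``not difficult to show'' precisely because it repeats the theorem's pullback computation with the viscous term $\nu\Delta\textbf{u}$ carried along, and the exact one-form $-d\bigl(p+\tfrac{1}{2}|\textbf{u}|^2\bigr)$ again vanishing around the closed loop). The sign you (and the paper) attach to the $u_j\nabla\widetilde{u}^j$ term after substitution is immaterial, since that term is an exact gradient and integrates to zero on $c(t)$ either way.
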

\begin{remark}
The Kelvin theorem \eqref{KCTheorem} is secretly Newton's second law $\frac{d\mathbf{P}}{dt} = \mathbf{F}$ for the momentum per unit mass $\mathbf{P}$ and the force per unit mass $\mathbf{F}$ for masses distributed on a space of loops. From this viewpoint, the circulation is momentum per unit mass and the right hand side of \eqref{KCTheorem} is the force per unit mass.
\end{remark}
We now replace the velocity $\widetilde{\textbf{u}}$ that transports the closed loop $c(t)$ with a stochastic process for the Lagrangian trajectory given by
\begin{align}
\widehat{\textbf{u}}\rightarrow {\sf \textcolor{red} d}\textbf{y}_t = \textbf{u}\,dt + \sum_i {\boldsymbol \xi}_i\circ dW_t^i\,,
\label{SALTkelvinthm}
\end{align}
in which the assumption is made that $\nabla\cdot {\boldsymbol \xi}_i = 0$ for all $i = 1,\hdots, N$. 
This replacement introduces stochasticity into the dissipative Oberbeck-Boussinesq equations \eqref{DOBeqns} as follows
\begin{equation}
\begin{aligned}
{\sf \textcolor{red} d}\textbf{u}+ {\sf \textcolor{red} d}\textbf{y}_t\cdot\nabla\textbf{u} + u_j\nabla{\sf \textcolor{red} d}y_t^j &= (-\nabla p + \alpha g T \nabla z + \nu\Delta\textbf{u}) dt,\\
{\sf \textcolor{red} d}T + {\sf \textcolor{red} d}\textbf{y}_t\cdot\nabla T &= \gamma \Delta T dt,\\
\nabla\cdot {\sf \textcolor{red} d}\textbf{y}_t &= 0, 
\end{aligned}
\label{SOB}
\end{equation}
\bigskip

In stochastic fluid dynamics with SALT, the advective velocity field transforms, as we have seen. The framework for this transformation can be found in \cite{holm2015variational}. The Eulerian velocity field becomes the stochastic vector field ${\sf \textcolor{red} d}\textbf{y}_t$, which also advects the loop in the Kelvin circulation theorem. Here, this change was applied to the Oberbeck-Boussinesq equations to derive their stochastic analogue with SALT. The development of model equations for stochastic fluid dynamics revolves around the choice of forces appearing in Newton's second law and Kelvin's circulation theorem. A similar approach can be found in \cite{memin2014fluid,resseguier2017mixing}, where the goal is stochastic turbulence modelling. In \cite{chapron2018large} a stochastic version of Lorenz 63 can be found with a similar structure, as the noise was introduced carefully in the transport terms.

\subsection{Lorenz 63 equations}
In order to be able to project the equations onto the Fourier modes used in \cite{lorenz1963deterministic}, the OB equations in \eqref{SOB} need to written in terms of scalar vorticity and temperature profile, as in \cite{saltzman1962finite}. The temperature $T(x,z,t)$ at constant $y$ can be expanded into a horizontal mean value and a departure from that mean
\begin{equation}
T(x,z,t) = \overline{T}(z,t) + T'(x,z,t),
\end{equation}
where $\overline{T}$ is the horizontal mean and $T'$ is the departure from the mean. Also the mean temperature can be decomposed into a linear difference between the lower and upper boundary and a perturbation of this linear difference. This gives
\begin{equation}
\overline{T}(z,t) = \overline{T}(0,t) - \frac{T_\Delta}{H} z + \overline{T}''(z,t),
\end{equation}
where $T_{\Delta}$ is the constant temperature difference between the lower and upper boundary, $H$ is the height between the vertical boundaries and $\overline{T}''$ is the perturbation of the linear difference. The temperature can therefore be written as
\begin{equation}
T(x,z,t) = \left( \overline{T}(0,t) - \frac{T_\Delta}{H}z\right) + T'(x,z,t) +  \overline{T}''(z,t).
\label{tempexp}
\end{equation}
The temperature profile is defined as 
\begin{equation}
\phi(x,z,t) := T'(x,z,t) + \overline{T}''(z,t).
\end{equation}
Substitution of equation \eqref{tempexp} into the convection-diffusion equation for the heat yields
\begin{equation}
{\rm \textcolor{red}{d}} \phi + {\sf \textcolor{red} d}\textbf{y}_t\cdot\nabla \phi = \left(\frac{T_\Delta}{H}w + \gamma \Delta \phi\right) dt,
\label{tempprof}
\end{equation}
where $w$ is the vertical component of the velocity field. The vorticity equation is obtained by taking the curl of the momentum equation in \eqref{SOB}. This eliminates the pressure term and since the domain is two dimensional, the vorticity is a scalar function. This gives
\begin{equation}
{\rm \textcolor{red}{d}} \omega + {\sf \textcolor{red} d}\textbf{y}_t\cdot\nabla \omega = \left(\nu\Delta\omega + \alpha g \phi_x\right)dt.
\label{vorticity}
\end{equation}
The velocity terms in \eqref{tempprof} and \eqref{vorticity} can be rewritten in terms of streamfunctions. The streamfunction associated to the deterministic velocity field $\textbf{u}$ will be denoted by $\psi$ and the streamfunction associated to the stochastic vector field ${\sf \textcolor{red} d}\textbf{y}_t$ will be denoted by $\widetilde{\psi}$,
\begin{equation}
\begin{aligned}
{\rm \textcolor{red}{d}} \omega + \left|\frac{\partial(\widetilde{\psi},\omega)}{\partial(x,z)}\right| &= \left(\nu\Delta\omega + \alpha g \phi_x\right)dt,\\
{\rm \textcolor{red}{d}} \phi + \left|\frac{\partial(\widetilde{\psi},\phi)}{\partial(x,z)}\right|  &= \left(\gamma \Delta \phi - \frac{T_\Delta}{H}\psi_x\right) dt,\\
\omega &= -\Delta \psi.
\end{aligned}
\label{Lorenzformulation}
\end{equation}
This is the same formulation of the Rayleigh-B\'enard convection problem as in \cite{saltzman1962finite, lorenz1963deterministic}. The nonlinear terms have been rewritten in terms of a determinant of a Jacobian. Lorenz used the following truncated Fourier series
\begin{equation}
\begin{aligned}
\frac{k}{\gamma(1+k^2)}\psi &= X\sqrt{2}\sin\left(\frac{k\pi x}{H}\right)\sin\left(\frac{\pi z}{H}\right),\\
\frac{\pi R_a T_\Delta}{R_c}\phi &= Y\sqrt{2}\cos\left(\frac{k\pi x}{H}\right)\sin\left(\frac{\pi z}{H}\right) - Z\sin\left(\frac{2\pi z}{H}\right).
\end{aligned}
\label{deterministicfouriermodes}
\end{equation} 
In these Fourier expansions, $k$ is the wave number, $R_a = \alpha g H^3 T_\Delta \nu^{-1}\gamma^{-1}$ is the Rayleigh number and $R_c = \pi^4 k^{-2}(1+k^2)^3$ is the critical value of the Rayleigh number. These scaling constants are introduced so that the resulting equations take a compact form. We take the truncated Fourier series of the stochastic streamfunction $\widetilde{\psi}$ to be the same as the expansion of the deterministic streamfunction $\psi$, but with a stochastic coefficient. The motivation for this choice is that from a physical point of view, we do not want the stochasticity to give rise to types of motion other than rolls between the two plates. The expansion of the stochastic streamfunction then is
\begin{equation}
\frac{k}{\gamma(1+k^2)}\widetilde{\psi} = (X\sqrt{2}dt + \beta\sqrt{2}\circ dW_t)\sin\left(\frac{k\pi x}{H}\right)\sin\left(\frac{\pi z}{H}\right).
\label{stochasticfouriermodes}
\end{equation}
Carrying out the projection yields the Lorenz 63 system, augmented with SALT,
\begin{equation}
\begin{aligned}
{\rm \textcolor{red}{d}}X &= \sigma(Y-X)d\tau,\\
{\rm \textcolor{red}{d}}Y &= (rX-XZ-Y)d\tau - \beta Z\circ dW_\tau,\\
{\rm \textcolor{red}{d}}Z &= (XY - bZ)d\tau + \beta Y\circ dW_\tau,
\end{aligned}
\label{SALTL63}
\end{equation}
where $\sigma = \gamma\nu^{-1}$ is the Prandtl number, $r = R_aR_c^{-1}$ is a scaled Rayleigh number and $b = (4(1+k^2))^{-1}$ is a parameter related to the wavenumber. Note that there is no stochasticity in the term proportional to $r$. The equations in \eqref{SALTL63} were obtained by projecting \eqref{SOB} onto Fourier modes. In \eqref{SOB}, the stochasticity is in the nonlinear terms and not in the term that is proportional to $\alpha g$. The term proportional to $\alpha g$ is the term that corresponds to the term proportional to $r$ in \eqref{SALTL63}. The time $\tau = \pi^2(1+k^2)\gamma t H^{-2}$ is dimensionless. Hereafter, we will not distinguish between $\tau$ and $t$ and just write $t$. In \eqref{Lorenzformulation}, observe that the noise appears only in the nonlinear transport terms. We can formally rewrite \eqref{SALTL63} to show that also the associated Lorenz equations have this property,
\begin{equation}
\begin{aligned}
{\rm \textcolor{red}{d}}X &= \sigma(Y-X)dt,\\
{\rm \textcolor{red}{d}}Y &= (rX-\widetilde{X}Z-Y)dt ,\\
{\rm \textcolor{red}{d}}Z &= (\widetilde{X}Y - bZ)dt,
\end{aligned}
\label{formalSALTL63}
\end{equation}
upon introducing the notation, $\widetilde{X} = X + \beta\circ d\dot{W}_t$. The nonlinear transport terms in \eqref{SALTL63} and \eqref{formalSALTL63} represent circulation as rigid rotation of the $Y,Z$ plane around the $X$-axis at angular velocity $\widetilde{X}$. Physically, then, the  SALT modification may be interpreted as a stochastic transport around the $X$-axis with angular velocity of circulation given by $\beta \circ dW_\tau$. The It\^o form of \eqref{SALTL63} and \eqref{formalSALTL63} is
\begin{equation}
\begin{aligned}
{\rm \textcolor{red}{d}}X &= \sigma(Y-X)d\tau,\\
{\rm \textcolor{red}{d}}Y &= \left(rX-XZ-\left(1+\frac{\beta^2}{2}\right)Y\right)d\tau - \beta Z dW_\tau,\\
{\rm \textcolor{red}{d}}Z &= \left(XY - \left(b+\frac{\beta^2}{2}\right)Z\right)d\tau + \beta Y dW_\tau\,.
\end{aligned}
\label{SALTL63ito}
\end{equation}
The first equation in each of the equation sets  \eqref{SALTL63}, \eqref{formalSALTL63} and \eqref{SALTL63ito} contains no noise. This is because the Jacobian projects to zero for this Fourier expansion. A stochastic L63 system similar to that in \eqref{SALTL63ito} was also derived using the ``location uncertainty'' approach in \cite{chapron2018large}.

In \cite{chekroun2011stochastic}, a stochastic version of the Lorenz system was introduced and found to possess a pullback attractor that supports a random Sinai-Ruelle-Bowen (SRB) measure. We will not try to compute the SRB measure for our version of a stochastic Lorenz system. Instead, we will compute the numerical Lyapunov exponents for both these systems. In \cite{chekroun2011stochastic}, linear multiplicative It\^o noise is added in each variable, and the stochastic Lorenz system is formulated as
\begin{equation}
\begin{aligned}
{\rm \textcolor{red}{d}}X &= \sigma(Y-X)dt + \beta X dW_t,\\
{\rm \textcolor{red}{d}}Y &= (rX-XZ-Y)dt + \beta Y dW_t,\\
{\rm \textcolor{red}{d}}Z &= (XY - bZ)dt + \beta Z dW_t.
\end{aligned}
\label{ChekrounL63}
\end{equation}
When comparing linear terms in \eqref{ChekrounL63}, one sees that the stochasticity is paired with the dissipation. Hence we call this noise fluctuation-dissipation noise.
\section{Lyapunov Exponents}\label{LyapExp}
We now want to compare the two version of stochastic Lorenz 63 in terms of their Lyapunov exponents. Firstly, we will determine the sum of the Lyapunov exponents analytically, since this is possible for the Lorenz 63 system. To be able to compute Lyapunov exponents, a number of conditions are necessary. First of all, the system of equations needs to generate a random dynamical system (RDS), which is a tuple $(\varphi,\vartheta)$, where $\varphi$ is a cocycle, the solution of the dynamical system $\vartheta$. Additionally, an integrability condition,
\begin{equation}
\log^+\|D\varphi(t,\omega,x)\|\in L^1,
\label{METintegrabilitycondition}
\end{equation} 
where $\log^+ x:= \max(0,\log x)$, needs to be satisfied to make sure that the linear equation associated to the system that we are considering is wellposed. The integrability condition is sufficient for Oseledet's multiplicative ergodic theorem (MET), which implies the regularity and existence of Lyapunov exponents. A stochastic differential equation is locally wellposed if its solutions exist locally and are unique up to indistinguishability. Sufficient conditions are local Lipschitz continuity and a linear growth condition. The coefficients of the stochastic differential equations in \eqref{SALTL63} and \eqref{ChekrounL63} are of polynomial type, and therefore smooth. This implies local Lipschitz continuity. Both systems satisfy linear growth. After introducing the following notation, the following theorem implies that both \eqref{SALTL63} and \eqref{ChekrounL63} generate a RDS. In general, a Stratonovich stochastic differential equation can be written as
\begin{equation}
{\rm \textcolor{red}{d}}x_t = f_0(x_t)dt + \sum_{j=1}^m f_j(x_t)\circ dW_t^j = \sum_{j=0}^m f_j(x_t)\circ dW_t^j,
\end{equation}
with the convention $dW_t^0 = dt$ to allow for this shorthand. 
From \cite{arnold2003random}, we have the following theorem.
\begin{theorem}[RDS from Stratonovich SDE]
Let $f_0\in \pazocal{C}^{k,\delta}_b$, $f_1,\hdots,f_m\in\pazocal{C}^{k+1,\delta}_b$ and $\sum_{j=1}^m\sum_{i=1}^d f_j^i\frac{\partial}{\partial x_i}f_j\in\pazocal{C}^{k,\delta}_b$ for some $k\geq 1$ and $\delta > 0$. Here $\pazocal{C}^{k,\delta}_b$ is the Banach space of $\pazocal{C}^k$ vector fields on $\mathbb{R}^d$ with linear growth and bounded derivatives up to order $k$ and the $k$-th derivative is $\delta$-H\"older continuous. Then:
\begin{enumerate}[i)]
\item 
\begin{equation}
{\textcolor{red} {\sf d}}x_t = \sum_{j=0}^m f_j(x_t)\circ\d{W}_t^j, \qquad t\in\mathbb{R}
\label{stratsde}
\end{equation}
generates a unique $\pazocal{C}^k$ RDS $\varphi$ over the dynamical system (DS) describing Brownian Motion (the background theory for this can be found in \cite{arnold2003random, elworthy1978stochastic}). For any $\epsilon\in(0,\delta)$, $\varphi$ is a $\pazocal{C}^{k,\epsilon}$-semimartingale cocycle and $(t,x)\to\varphi(t,\omega)x$ belongs to $\pazocal{C}^{0,\beta;k,\epsilon}$ for all $\beta<\frac{1}{2}$ and $\epsilon<\delta$.
\item The RDS $\varphi$ has stationary independent (multiplicative) increments, i.e. for all $t_0\leq t_1\leq \hdots \leq t_n$, the random variables
\begin{align*}
\varphi(t_1)\circ\varphi(t_0)^{-1},\quad \varphi(t_2)\circ\varphi(t_1)^{-1},\quad\hdots,\quad \varphi(t_n)\circ\varphi(t_{n-1})^{-1}
\end{align*} 
are independent and the law of $\varphi(t+h)\circ\varphi(t)^{-1}$ is independent of $t$. Here $\circ$ means composition.
\item If $D\varphi(t,\omega,x)$ denotes the Jacobian of $\varphi(t,\omega)$ at $x$, then $(\varphi,D\varphi)$ is a $\pazocal{C}^{k-1}$ RDS uniquely generated by \eqref{stratsde} together with 
\begin{equation}
{\textcolor{red} {\sf d}}v_t = \sum_{j=0}^m Df_j(x_t)v_t\circ\d{W}_t^j, \qquad t\in\mathbb{R}
\label{matrixSDE}
\end{equation}
Hence $D\varphi$ uniquely solves the variational Stratonovich SDE on $\mathbb{R}$
\begin{equation}
D\varphi(t,x) = I + \sum_{j=0}^m\int_0^t Df_j(\varphi(s)x)D\varphi(s,x)\circ\d{W}^j_s, \qquad t\in\mathbb{R}
\end{equation}
and is thus a matrix cocycle over $\Theta = (\vartheta,\varphi)$.
\item The determinant $\det D\varphi(t,\omega,x)$ satisfies Liouville's equation on $\mathbb{R}$
\begin{equation}
\det D\varphi(t,x) = \exp\left(\sum_{j=0}^m\int_0^t\text{trace}(Df_j(\varphi(s)x))\circ\d{W}_s^j\right)
\label{liouville}
\end{equation}
and is thus a scalar cocycle over $\Theta$.
\end{enumerate}
\end{theorem}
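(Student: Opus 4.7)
The plan is to reduce each claim to classical results about stochastic flows generated by Itô SDEs, after performing the standard Stratonovich-to-Itô conversion which is precisely why the hypothesis $\sum_{j,i} f_j^i \partial_i f_j \in \pazocal{C}^{k,\delta}_b$ is imposed. First I would rewrite \eqref{stratsde} in Itô form as
\begin{equation*}
dx_t = \Bigl(f_0(x_t) + \tfrac{1}{2}\sum_{j=1}^m\sum_{i=1}^d f_j^i(x_t)\,\partial_i f_j(x_t)\Bigr)dt + \sum_{j=1}^m f_j(x_t)\,dW_t^j,
\end{equation*}
noting that the drift now belongs to $\pazocal{C}^{k,\delta}_b$ and the diffusion coefficients to $\pazocal{C}^{k+1,\delta}_b$. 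Linear growth and local Lipschitz continuity yield a unique strong global solution for every initial condition $x$, and the cocycle property $\varphi(t+s,\omega)=\varphi(t,\vartheta_s\omega)\circ\varphi(s,\omega)$ follows from the Markov property of Brownian motion together with the time-homogeneity of the coefficients, once one fixes a version of the Wiener shift $\vartheta$.

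For part (i), to upgrade the pointwise solution into a genuine $\pazocal{C}^k$-cocycle I would appeal to Kunita's stochastic flow theorem: under the stated regularity of the coefficients, the map $x\mapsto\varphi(t,\omega)x$ admits a modification that is a $\pazocal{C}^k$-diffeomorphism, with the $k$-th derivative $\delta$-Hölder in $x$ up to any $\epsilon<\delta$. The joint $\pazocal{C}^{0,\beta;k,\epsilon}$ regularity in $(t,x)$ follows from Kolmogorov's continuity criterion applied to the spatial derivatives of $\varphi$, using $L^p$ moment estimates for the variational process; the standard martingale inequalities give Hölder exponents in time strictly less than $1/2$, which explains the bound $\beta<1/2$. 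Part (ii) is then a direct consequence of the cocycle identity combined with the independent-increments property of $W$: the increments $\varphi(t_k)\circ\varphi(t_{k-1})^{-1}=\varphi(t_k-t_{k-1},\vartheta_{t_{k-1}}\omega)$ are measurable with respect to disjoint $\sigma$-algebras of Brownian increments, and their common law depends only on the length $t_k-t_{k-1}$.

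For part (iii), I would differentiate \eqref{stratsde} formally in the initial datum $x$. The resulting variational equation \eqref{matrixSDE} is a linear Stratonovich SDE in the matrix variable $v_t$ whose coefficients $Df_j$ lie in $\pazocal{C}^{k,\delta}_b$, hence it generates its own unique solution. Identifying this solution with the actual Jacobian $D\varphi$ is the content of the smoothness of the stochastic flow: one shows that the difference quotients $(\varphi(t)(x+hv)-\varphi(t)x)/h$ converge in $L^2$ to the solution of \eqref{matrixSDE} as $h\to0$, and then improves this to almost-sure convergence via Kolmogorov continuity. The cocycle property of $D\varphi$ over the skew product $\Theta=(\vartheta,\varphi)$ follows by differentiating the cocycle identity for $\varphi$ and applying the chain rule.

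Finally, for part (iv) I would apply Jacobi's formula to $A_t:=D\varphi(t,x)$. A crucial simplification here is that Stratonovich calculus obeys the ordinary chain rule, so from $\circ\, dA_t = \sum_j Df_j(\varphi_t x) A_t \circ dW_t^j$ one obtains
\begin{equation*}
\circ\, d\bigl(\det A_t\bigr) = \det(A_t)\,\operatorname{tr}\bigl(A_t^{-1}\circ dA_t\bigr) = \det(A_t)\sum_{j=0}^m \operatorname{tr}\bigl(Df_j(\varphi_t x)\bigr)\circ dW_t^j,
\end{equation*}
using the cyclicity of the trace and $\det A_t \neq 0$ (which holds since $\varphi(t,\omega)$ is a diffeomorphism, so $A_0=I$ and $\det A_t$ never vanishes by continuity). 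Integrating this linear Stratonovich equation in the scalar $\det A_t$ with initial condition $1$ gives the exponential formula \eqref{liouville}. The main obstacle in carrying out this program rigorously is the justification of the modification argument in part (i): one has to interleave $L^p$-estimates of increments in $(t,x)$ for the flow and all its spatial derivatives up to order $k$ with Kolmogorov's criterion in the correct function spaces to obtain the stated $\pazocal{C}^{0,\beta;k,\epsilon}$ regularity, and this is where the bounded-derivative assumption on the coefficients, rather than mere local Lipschitz continuity, is essential.
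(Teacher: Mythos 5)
The paper offers no proof of this theorem at all---it simply cites Arnold's \emph{Random Dynamical Systems} \cite{Arnold:03}---and your sketch reconstructs exactly the standard route taken there (Stratonovich-to-It\^o conversion, Kunita's flow theory plus Kolmogorov-type moment estimates for part (i), Brownian increments for (ii), the variational equation for (iii), and the Stratonovich chain rule with Jacobi's formula for the Liouville equation in (iv)). So your proposal is essentially the same approach as the proof the paper defers to, with the only caveat that a fully rigorous version would also need the two-sided time construction and the perfection of the crude cocycle, both of which Arnold/Kunita handle.
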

The background theory and proof of this theorem can be found in \cite{arnold2003random}. The variational equation is commonly referred to as the tangent model. The matrix solutions of the tangent model are commonly called the fundamental matrix of solutions. Although both \eqref{SALTL63} and \eqref{ChekrounL63} satisfy the theorem above, an additional observation is required to guarantee that the solutions do not blow up. This is can be shown by a Lyapunov function argument, which will imply that deterministic Lorenz equations have a global attractor set. Together with the local existence and uniqueness of strong solutions to the system of SDEs, this is enough to satisfy the integrability condition. To prove the existence of a globally attracting set, one considers the Lyapunov function \cite{sparrow2012lorenz}
\begin{equation}
V(\textbf{X}) = rX^2+\sigma Y^2+\sigma(Z-2r)^2.
\end{equation}  
Dividing the total time derivative of $V(\textbf{X})$ by $2r^2\sigma b$ yields the equation for an ellipsoid
\begin{equation}
\frac{\dot{V}(\textbf{X})}{2r^2\sigma b} = -\frac{X^2}{br} - \frac{Y^2}{br} - \frac{(Z-r)^2}{r^2}+1\,.
\end{equation}
This shows that $\dot{V}$ is negative outside of the ellipsoid and positive inside the ellipsoid given by
\begin{equation}
\frac{X^2}{br}+\frac{Y^2}{br}+\frac{(Z-r)^2}{r^2}=1\,.
\end{equation}
So inside the ellipsoid the dynamics are unstable, as there is no converging behaviour. Outside of the ellipsoid, where $\dot{V}<0$, the dynamics converge towards the ellipsoid. Hence $V(\textbf{X})$ is a Lyapunov function outside of an ellipsoid. This proves that no finite time blow-up can occur for the deterministic case. Since the transport noise and fluctuation-dissipation noise Lorenz systems both satisfy linear growth, the stochastic versions also do not blow up. We are now able to prove that both \eqref{SALTL63} and \eqref{ChekrounL63} satisfy the integrability condition \eqref{METintegrabilitycondition}. For all finite systems of SDEs (so no stochastic partial differential equations), the Jacobian of the dynamics is a square matrix. Since in $\mathbb{R}^{d\times d}$ all norms are equivalent, the condition is satisfied or dissatisfied for all norms simultaneously. The Jacobian for \eqref{SALTL63} is
\begin{equation}
Df_0 + Df_1 = \left(\begin{matrix}
-\sigma & \sigma & 0\\
r - Z & -1 & - X - \beta\\
Y & X+\beta & -b
\end{matrix}\right),
\end{equation}
and for \eqref{ChekrounL63} the Jacobian is given by
\begin{equation}
Df_0 + Df_1  = \left(\begin{matrix}
-\sigma+\beta & \sigma & 0\\
r - Z & -1+\beta & - X \\
Y & X & -b + \beta
\end{matrix}\right).
\end{equation}
For Oseledet's MET to be valid we require $\|\sum_{j=0}^m Df_j\|\in L^1$. This is true if all elements of the matrices are in $L^1$. This conditions is violated if any of the elements of the matrix is unbounded, since then the argument of the logarithm would become unbounded. The Lyapunov function has proven that the dynamics have a global attractor and we have established local existence and uniqueness, so for any initial condition, the dynamics stay bounded. Therefore the integrability condition \eqref{METintegrabilitycondition} is satisfied and regularity and existence of Lyapunov exponents is guaranteed. Oseledet's MET now states that, for $v_t$ the solution of \eqref{matrixSDE}, $\lim_{t\to\infty}(v_t(\omega)^T v_t(\omega))^{1/2t} =: \Phi(\omega)\geq 0$ exists and the logarithm of the eigenvalues of $\Phi$ are the Lyapunov exponents. 
\subsection{Sum of Lyapunov exponents}
By definition of Lyapunov exponents and by using Liouville's equation \eqref{liouville} (also called Abel-Jacobi-Liouville formula), the following important fact is derived.
\begin{lemma}
If the trace of the Jacobian $Df_0$ is constant and the trace of $Df_j$ for $j\geq 1$ is zero, then the sum of the Lyapunov exponents is equal to the trace of $Df_0$. 
\end{lemma}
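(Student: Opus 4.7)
My plan is to reduce the statement to the Abel--Jacobi--Liouville formula \eqref{liouville} via the standard identification of the sum of Lyapunov exponents with the exponential growth rate of the Jacobian determinant. The first step is to observe that, by Oseledets' theorem as stated just above, $\Phi(\omega) = \lim_{t\to\infty}(D\varphi(t,\omega,x)^T D\varphi(t,\omega,x))^{1/2t}$ exists and its eigenvalues are $e^{\lambda_1},\ldots,e^{\lambda_d}$, where the $\lambda_i$ are the Lyapunov exponents. Taking determinants and using $\det(A^TA)^{1/2} = |\det A|$ gives
\begin{equation*}
\sum_{i=1}^d \lambda_i \;=\; \log\det\Phi(\omega) \;=\; \lim_{t\to\infty}\frac{1}{t}\log\bigl|\det D\varphi(t,\omega,x)\bigr|.
\end{equation*}

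The second step is to feed Liouville's formula \eqref{liouville} into the right-hand side. This yields
\begin{equation*}
\frac{1}{t}\log\bigl|\det D\varphi(t,\omega,x)\bigr| \;=\; \frac{1}{t}\sum_{j=0}^m\int_0^t \operatorname{trace}\bigl(Df_j(\varphi(s)x)\bigr)\circ \d{W}^j_s.
\end{equation*}
The hypothesis now collapses every term on the right: for $j\geq 1$, the integrand is identically zero, so the Stratonovich integral vanishes pathwise; for $j=0$, the integrand equals the constant $\operatorname{trace}(Df_0)$, and $\d{W}^0_s = \d{s}$, so the $j=0$ contribution is $t\cdot\operatorname{trace}(Df_0)$. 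Dividing by $t$ and taking the limit gives $\sum_i\lambda_i = \operatorname{trace}(Df_0)$.

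The only delicate point, and the one I would flag as the main potential obstacle, is the first step: the Oseledets multiplicative ergodic theorem produces the exponents only $\mu$-almost surely with respect to an invariant measure for $\Theta$, and in principle the individual limits defining the $\lambda_i$ need not exist along every sample path even though their sum does (by \eqref{liouville} above). Here, however, we do not need the individual limits; we only need that the sum is well defined, which follows because the Liouville identity is a pathwise statement. Thus the argument is genuinely pathwise once \eqref{liouville} is in hand, and no additional integrability beyond the one already verified via the Lyapunov-function argument preceding the lemma is required. A concluding remark will note that the deterministic Lorenz~63 Jacobian has constant trace $-(\sigma+1+b)$, and one then only has to check the traceless condition for the diffusion vector fields separately for \eqref{SALTL63} and \eqref{ChekrounL63} to see why SALT preserves the sum and FD noise does not.
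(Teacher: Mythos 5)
Your proposal is correct and follows essentially the same route as the paper's own proof: identify the sum of the Lyapunov exponents with the exponential growth rate of $\det D\varphi(t,\omega,x)$ via the determinant of the Oseledets matrix $\Phi$, then apply the Liouville formula \eqref{liouville} and use the hypotheses to reduce the exponent to $t\,\operatorname{trace}(Df_0)$. Your additional care with the absolute value of the determinant and the almost-sure nature of the Oseledets limit is a slight refinement of, not a departure from, the paper's argument.
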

\begin{proof}
By taking the determinant of $\Phi$ and using several properties of the determinant for square matrices, we can show
\begin{equation}
\lim_{t\to\infty}\left(\det(v_t^T v_t)^{1/2}\right)^{1/t} = \lim_{t\to\infty}(\det v_t)^{1/t} = \lim_{t\to\infty}\left(\prod_{i=1}^n e^{\gamma_i}\right)^{1/t}.
\label{matrixidentities}
\end{equation}
First, for any square matrix $A$, $\det(A^T)=\det(A)$, which lets us write $\det(v_t^T v_t) = \det(v_t^2)$. Secondly, for any square matrix $A$ and $B$, we have $\det(AB) = \det(A)\det(B)$, which allows us to write $\det(v_t^2)=\det(v_t)^2$. Finally, the determinant of a square matrix is related to the eigenvalues of that square matrix by $\det(A)=\prod_i\kappa_i$, where $\kappa_i$ are the eigenvalues of A. Now let $e^\gamma_i$ be the eigenvalues of $v_t$. Then $\gamma_i$ are the unaveraged Lyapunov exponents. Using Liouville's equation \eqref{liouville} and the right hand side of \eqref{matrixidentities}
\begin{equation}
\lim_{t\to\infty} \exp\left(\sum_{j=0}^m\int_0^t \text{trace}(Df_j)\circ dW_s^j\right)^{1/t} = \lim_{t\to\infty}(\det v_t)^{1/t} = \lim_{t\to\infty}\left(\prod_{i=1}^n e^{\gamma_i}\right)^{1/t} = \lim_{t\to\infty}\exp\left(\sum_{i=1}^n\gamma_i\right)^{1/t}
\end{equation}
Since the trace of the Jacobian $Df_0$ was assumed to be constant and the trace of Jacobian $Df_j$ is zero for all $j\geq 1$, by taking the logarithm, we find
\begin{equation}
\sum_{i=1}^n \lambda_i = \lim_{t\to\infty}\big(\text{trace}(Df_0)\big)\frac{t}{t}= \text{trace}(Df_0),
\end{equation}
where $\lambda_i$ are the Lyapunov exponents by definition.
\end{proof}

This lemma applies to the Lorenz 63 system with SALT \eqref{SALTL63}, where it implies that the sum of the Lyapunov exponents is equal to that of the deterministic system
\begin{equation}
\sum_{i=1}^3\lambda_i = -\sigma - 1 -b.
\end{equation} 
This conclusion does not hold for the Lorenz 63 system with fluctuation dissipation noise \eqref{ChekrounL63}, because for the latter the trace of $Df_1$ is nonzero. Still one can use the Liouville equation \eqref{liouville} and a similar computation to that in the proof of the lemma, to obtain the sum of the Lyapunov exponents for \eqref{ChekrounL63}
\begin{equation}
\sum_{i=1}^3\lambda_i = -\sigma -1 -b + 3\beta\lim_{t\to\infty}\frac{W_t}{t}\,.
\end{equation}
The sum of Lyapunov exponents represents the average rate of expansion or contraction of phase-space volume. Hence this result shows on a theoretical level that the phase-space contraction (or expansion) properties of the two systems are different for any finite time. In the limit, the two systems have the same properties.
\section{Method to compute Lyapunov exponents}
To compute the finite time approximation to the Lyapunov exponents (which we will refer to as numerical Lyapunov exponents, or NLEs), one needs to simultaneously solve the governing dynamics and the corresponding variational equation. When the dynamics are given by a system of differential equations, the variational equation becomes a matrix differential equation. The appropriate initial condition for the variational equation is usually the identity matrix, as this corresponds to evolving the unit ball along the linearised dynamics. The ball deforms and its average deformation is associated to the NLEs. However, directly solving the variational equation is problematic, as the vectors associated to the NLEs tend to align along the direction of largest increase. Regularly orthonormalising avoids this issue. The well known QR method is therefore the go-to option for solving the variational equation. The QR method dictates that the matrix $v_t$ is decomposed into an orthogonal matrix $Q\in O(n):= \{ Q\in\mathbb{R}^{n\times n}: \det Q = \pm 1\}$ and an upper triangular matrix $R$ at every time step. Consider the Stratonovich SDE on $\mathbb{R}^n$ given by
\begin{equation}
{\sf \textcolor{red} d}Y_t = \sum_{j=0}^m f_j(Y_t)\circ dW_t^j,
\end{equation} 
where the functions $f_j$ are sufficiently regular to guarantee wellposedness of the SDE. Again the convention $dW_t^0 = dt$ is used. The corresponding variational equation is
\begin{equation}
{\sf \textcolor{red} d}v_t = \sum_{j=0}^m J_j v_t\circ dW_t^j, \qquad v_0 = I, \quad v_t\in\mathbb{R}^{n\times n},
\end{equation}
where $J_j := Df_j(Y_t)$ is the Jacobian of the dynamical system and $I\in\mathbb{R}^{n\times n}$ is the identity matrix. Applying the QR decomposition, multiplying by $Q^T$ from the left and by $R^{-1}$ from the right then yields
\begin{equation}
Q^T{\sf \textcolor{red} d}Q + {\sf \textcolor{red} d}RR^{-1} = \sum_{j=0}^m Q^T J_j Q\circ dW_t^j.
\label{QRvariational}
\end{equation}
The first matrix on the left hand side is skew-symmetric and the second matrix is upper triangular. This fact will be used in solving for $Q$ and $R$ independently. It is necessary to take a new $QR$-decomposition once every so often, otherwise the matrix $Q$ may lose its orthogonality and cause the algorithm to break down. In addition, for high-dimensional dynamical systems, by means of the Cayley transform, in \cite{udwadia2002efficient} the $QR$-method is slightly adapted to gain a small speed-up.  The Cayley transform will provide an orthogonal matrix with determinant +1 as long as the eigenvalues do not approach -1. This may occur during the time integration, so a restarting procedure can avoid this potential problem. The restarting procedure is possible due to the following lemma.

In \eqref{matrixSDE}, for $t>t_0$ set $v_{t_0} = Q_0R_0$, where $Q_0$ is orthogonal and $R_0$ is upper triangular with all diagonal elements positive. As in \cite{udwadia2002efficient}, the real line is divided into subintervals $t_i\leq t \leq t_{i+t}$ for $i=1,2,\hdots,$ so that each interval has length $\Delta t_i = t_{i+1} - t_i$. The solution $v_{t_i}$ to the variational equation \eqref{matrixSDE} at time $t_i$ can be decomposed as $v_{t_i} = Q_i R_i$ for $i=1,2,\hdots$. We can now introduce the following lemma.
\begin{lemma}
At any time $t=t_i + \tau$, $0\leq \tau \leq \Delta t_i$, for $i=1,2,\hdots$, the solution of the variational equation \eqref{matrixSDE} can be expressed as
\begin{equation}
v_t = v(t_i +\tau) = Q_i  \widetilde{v}_\tau R_i = Q_i\widetilde{Q}_\tau \widetilde{R}_\tau R_i, \qquad 0\leq \tau \leq \Delta t_i,\quad t_i\leq t\leq t_{i+1},
\end{equation}
where $\widetilde{v}_\tau$ is the solution to the differential equation given by
\begin{equation}
{\sf \textcolor{red} d}\widetilde{v}_\tau = \sum_{j=0}^m \widetilde{J}_j(\tau)\widetilde{v}_\tau\circ dW_\tau^j,\qquad 0\leq\tau\leq \Delta t_i, \quad \widetilde{v}_0 = I,\quad i = 0,1,2,\hdots
\end{equation}
with $Q_0 =I$, $R_0=I$ and $\widetilde{J}_j(\tau) = Q_i^T J_j(t_i + \tau)Q_i$.
\end{lemma}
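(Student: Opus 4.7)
The plan is to verify the proposed factorisation directly by substituting the ansatz $v_t = Q_i\,\widetilde{v}_\tau\,R_i$ into the variational SDE on the interval $[t_i, t_{i+1}]$ and checking both the initial condition and the SDE itself, then appealing to the existence and uniqueness part of the RDS theorem quoted from \cite{Arnold:03}. The key observation that makes this work is that, on each fixed interval $[t_i, t_{i+1}]$, the matrices $Q_i$ and $R_i$ are \emph{deterministic constants} (they are determined by the solution at the single time $t_i$), so multiplying by them from the left or right commutes with the Stratonovich differential.

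First I would check the initial condition. At $\tau = 0$ we have $\widetilde{v}_0 = I$, so the ansatz gives $v_{t_i} = Q_i R_i$, which matches the assumed QR decomposition at time $t_i$. For the inductive base case $i = 0$, the choice $Q_0 = R_0 = I$ gives $v_{t_0} = I$, consistent with the initial condition of the variational equation. Next I would use the fact that the Stratonovich differential obeys the ordinary Leibniz rule, so that
\begin{equation*}
{\sf \textcolor{red} d} v_t = Q_i \,({\sf \textcolor{red} d}\widetilde{v}_\tau)\, R_i,
\end{equation*}
and substitute this together with the ansatz into the variational equation \eqref{matrixSDE}, obtaining
\begin{equation*}
Q_i\,({\sf \textcolor{red} d}\widetilde{v}_\tau)\,R_i = \sum_{j=0}^m J_j(t_i+\tau)\,Q_i\,\widetilde{v}_\tau\, R_i \circ dW_{t_i+\tau}^j.
\end{equation*}

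Multiplying on the left by $Q_i^T$ (using $Q_i^T Q_i = I$) and on the right by $R_i^{-1}$ (which exists because $R_i$ is upper triangular with strictly positive diagonal) isolates
\begin{equation*}
{\sf \textcolor{red} d}\widetilde{v}_\tau = \sum_{j=0}^m \bigl(Q_i^T J_j(t_i+\tau) Q_i\bigr)\,\widetilde{v}_\tau \circ dW_\tau^j = \sum_{j=0}^m \widetilde{J}_j(\tau)\,\widetilde{v}_\tau \circ dW_\tau^j,
\end{equation*}
which is exactly the claimed equation, where the shifted process $W_\tau^j := W_{t_i+\tau}^j - W_{t_i}^j$ is again a Brownian motion by the stationary independent increments of $W^j$. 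Since the transformed coefficients $\widetilde{J}_j$ inherit the regularity of $J_j$ (conjugation by the constant orthogonal matrix $Q_i$ preserves boundedness, smoothness and Hölder continuity), the RDS theorem applies and $\widetilde{v}_\tau$ is the unique solution. By the uniqueness of solutions of \eqref{matrixSDE}, the ansatz is thus the solution on the whole interval. The second equality $\widetilde{v}_\tau = \widetilde{Q}_\tau \widetilde{R}_\tau$ is then just the (pointwise in $\tau$) QR factorisation of the invertible matrix $\widetilde{v}_\tau$.

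The only subtle point is the manipulation of the Stratonovich differential under multiplication by constant matrices on both sides; the hard part, if any, is merely to be explicit that constant pre- and post-multipliers pass through $\circ\, dW^j$ because Stratonovich calculus obeys the ordinary product rule. Once this is acknowledged, the rest of the argument is algebraic, and the restart of Brownian motion on $[0,\Delta t_i]$ is handled by the stationary independent increments property.
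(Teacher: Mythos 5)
Your proof is correct and takes essentially the approach the paper relies on: the paper simply cites \cite{Udwadia:02} for the deterministic substitution-and-uniqueness argument and notes the stochastic extension is straightforward, and your write-up is exactly that extension made explicit (constant-in-$\tau$ factors pass through the Stratonovich differential, then invoke pathwise uniqueness and QR-factorise $\widetilde{v}_\tau$). One small correction: $Q_i$ and $R_i$ are not deterministic --- they depend on the realisation through $v_{t_i}$ --- but they are constant in $\tau$ and measurable at time $t_i$, which is all your Leibniz-rule and restarted-SDE steps actually require.
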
   
The proof of this lemma can be found in \cite{udwadia2002efficient}. Although in that proof the variational equation is deterministic, the stochastic case is straightforwardly found from the deterministic one, as the only change is the variational equation itself. The Cayley transform is defined as
\begin{equation}
Q = (I-K)(I+K)^{-1},
\label{Cayleytransform}
\end{equation}
where $I\in\mathbb{R}^{n\times n}$ is the identity matrix and $K\in\mathbb{R}^{n\times n}$ is a skew-symmetric matrix. An important feature of $(I-K)$ and $(I+K)^{-1}$ is that they commute. The transformation \eqref{Cayleytransform} is valid as long as none of the eigenvalues of $Q$ are equal to -1. By deriving the stochastic differential equation for $K$, the Cayley method takes form. Since the initial condition for $Q(0) = I$, the initial condition for $K(0) = 0$. By taking the stochastic evolution differential of $Q$, we obtain
\begin{equation}
{\sf \textcolor{red} d}Q = -{\sf \textcolor{red} d}K(I+K)^{-1} - (I-K)(I+K)^{-1}{\sf \textcolor{red} d}K(I+K)^{-1}.
\end{equation}
Using the skew-symmetry of $K$, the distributive property of the transpose, the fact that for any invertible matrix $A$ we have that $(A^T)^{-1} = (A^{-1})^T$ and by writing $(I+K) = -(I-K)+2I$ and setting $H:=(I+K)^{-1}$, one obtains the following expression
\begin{equation}
\begin{aligned}
Q^T{\sf \textcolor{red} d}Q &= -2(I-K)^{-1}{\sf \textcolor{red} d}K(I+K)^{-1}\\
&= -2H^T{\sf \textcolor{red} d}K H.
\end{aligned}
\label{CayleyLHS}
\end{equation}
Upon introducing the notation $G:=(I-K)$ and using $H$ as before, the right hand side of \eqref{QRvariational} is expressed as
\begin{equation}
\sum_{j=0}^m Q^T J_j Q\circ dW_t^j = \sum_{j=0}^m H^T G^T J_j GH\circ dW_t^j .
\label{CayleyRHS}
\end{equation}
Substitution of \eqref{CayleyLHS} and \eqref{CayleyRHS} into \eqref{QRvariational} yields the following differential equation
\begin{equation}
-2H^T{\sf \textcolor{red} d}KH + {\sf \textcolor{red} d}R R^{-1} = \sum_{j=0}^m H^T G^T J_j GH\circ dW_t^j.
\label{CayleySDE}
\end{equation}
Recall that the first matrix on the left hand side is skew-symmetric and the second matrix is upper triangular. Let $S = H^T{\sf \textcolor{red} d}KH$ so that
\begin{equation}
\begin{aligned}
S_{ab} = \begin{cases}
\frac{1}{2}\left(\sum_{j=0}^m H^TG^T J_j GH\right)_{ab}\circ dW_t^j &\qquad \text{for} \quad a>b\\
\qquad \qquad 0 &\qquad \text{for} \quad a = b\\
-\frac{1}{2}\left(\sum_{j=0}^m H^TG^T J_j GH\right)_{ab}\circ dW_t^j &\qquad \text{for} \quad a<b\,,
\end{cases}
\end{aligned}
\end{equation}
which determines the differential equation for $K$ as
\begin{equation}
\begin{aligned}
{\sf \textcolor{red} d}K = H^{-T}SH^{-1} = \begin{cases}
(G^TSG)_{ab} &\qquad \text{for} \quad a > b\\
\qquad 0 & \qquad \text{for} \quad a=b\\
-(G^TSG)_{ab} &\qquad \text{for} \quad a<b\,.
\end{cases}
\end{aligned}
\end{equation}
Observe that since $K$ is skew-symmetric, it is determined by the lower triangular part of $G^TSG$. Now that $K$ is known, the Lyapunov exponents are determined as the averages of the solutions of the differential equation for $\rho_a := \log(R_{aa})$,
\begin{equation}
{\sf \textcolor{red} d}\rho_a = \sum_{j=0}^m h_a^TG^T J_j G h_a \circ dW_t^j, \qquad \rho_a(0)=0,
\label{LyapunovExponentsCayley}
\end{equation}
where $h_a$ are the columns of $H = [h_1 \quad h_2 \quad \hdots \quad h_n]$. The Lyapunov exponents are then found as
\begin{equation}
\lambda_a = \lim_{t\to\infty}\frac{\rho_a}{t}.
\end{equation}
Recall that this solution method is valid as long as the eigenvalues of $Q$ do not equal -1. As the initial condition is $Q(0)=I$, there is always an interval of time $0\leq t\leq t_0$ in which the condition for the Cayley transform \eqref{Cayleytransform} is not violated. The following condition for restarting the algorithm is introduced: let $\eta\in(0,1)$ be chosen by the user such that $\|K\|\leq \eta < 1$ for some suitable norm. At time $t_0$, when the norm of $K$ equals $\eta$, $Q(t_0) =: Q_0$ is computed and stored. The algorithm is then restarted at time $t_0$, where due to the lemma we have
\begin{equation}
{\sf \textcolor{red} d}v_t = \sum_{j=0}^m Q_0^TJ_j Q_0 v_\tau \circ dW_\tau^j = \sum_{j=0}^m \widetilde{J}_jv_\tau \circ dW_\tau^j
\end{equation}
for $t_0 \leq \tau$. Besides the adapted Jacobian, this is the same equation and hence can be solved using the Cayley method as long as the norm of $K$ is smaller than our chosen value for $\eta$. The initial condition for equation \eqref{LyapunovExponentsCayley} changes to $\rho(0) = \rho(t_0)$.
\section{Numerical Results}
The Lorenz system has been studied intensively with the standard parameter values $\sigma = 10$, $r = 28$ and $b=8/3$, \cite{lorenz1963deterministic, arnold2001lyapunov}, though in the latter two papers for an adapted version of the Lorenz system. Lorenz showed that for these parameter values, the deterministic Lorenz has a strange attractor. In \cite{wolf1985determining}, the Lorenz system is studied for the nonstandard parameter values $\sigma = 16$, $r=45.92$ and $b=4$.  Upon introducing stochasticity, one can no longer speak of an attractor in the standard sense, since the noise, due to the unbounded variation nature of the Wiener process, will push the dynamics out of any bounded set almost surely. We set the initial condition to $(X(0),Y(0),Z(0)) = (0,1,0)$ and evolve the system for 50000 time steps with time step size $\Delta t = 0.001$ with the Euler-Maruyama method. This sets the initial condition for determining the numerical Lyapunov exponents. We solve the SDEs in the Cayley method with the same time step $\Delta t$ for $10^5$ iterations in total. The norm tolerance for the matrix $K$ is set to $\eta = 0.8$. It is known that the Euler-Maruyama method has poor convergence, so the individual exponents can be computed much more accurately if one were to implement a better numerical method. For the deterministic case, the individual exponents agree reasonably well with the existing literature. The sum of the numerical Lyapunov exponents turns out to be a very robust value, as even the Euler-Maruyama method establishes the correct value to high accuracy. 
\bigskip

\subsection{Deterministic Case}
When there is no noise, the Liouville equation implies that sum of the Lyapunov exponents is equal to the trace of the Jacobian of the dynamics. For the standard parameter values $\sigma = 10, r = 28$ and $b = 8/3$, the sum of the Lyapunov exponents is
\begin{equation}
\sum_{i=1}^3 \lambda_i = -\sigma - 1 -b \approx - 13.6667,
\end{equation}
The individual NLEs for the deterministic Lorenz 63 system have been determined by \cite{sprott2003chaos} who used a 4th order Runge-Kutta method with a fixed step size of 0.001, performed over $10^9$ iterations. The Cayley method allows us to determine the individual NLEs, where we solve the dynamics with a forward Euler scheme. We find the following values
\begin{table}[H]
\centering
\begin{tabular}{lccc|c}
& $\lambda_1$ & $\lambda_2$ & $\lambda_3$ & $\sum_{i=1}^3\lambda_1$\\
\hline & & & &\\
Cayley method with forward Euler & 0.8739 & -0.0798 & -14.4604 & -13.6665\\
& & & &\\
Values according to \cite{sprott2003chaos,sparrow2012lorenz} & 0.9056 & 0 & -14.5721 & -13.6665
\end{tabular}
\caption{The individual NLEs and sum for $\sigma = 10$, $r=28$ and $b=8/3$ as computed with the Cayley method and those found in literature.}
\end{table}
The values are not exactly the same for the individual NLEs. This is due to the poor convergence of the numerical methods used here (Euler-Maruyama has order 1/2 convergence). However, the sum is the same in four decimal places. As an additional comparison, we also study the deterministic system for the nonstandard parameter values used in \cite{wolf1985determining},  $\sigma = 16, r = 45.92$ and $b=4$, where the sum is 
\begin{equation}
\sum_{i=1}^3\lambda_i =-\sigma-1-b= -21.
\end{equation}
In this situation, we find the following values
\begin{table}[H]
\centering
\begin{tabular}{lccc|c}
& $\lambda_1$ & $\lambda_2$ & $\lambda_3$ & $\sum_{i=1}^3\lambda_1$\\
\hline & & & &\\
Cayley method with forward Euler & 1.4858 & -0.0721 & -22.4135 & -20.9998\\
& & & &\\
Values according to \cite{wolf1985determining} & 1.50 & 0 & -22.46 & -22.96
\end{tabular}
\caption{The individual NLEs and sum for $\sigma = 16$, $r=45.92$ and $b=4$ as computed with the Cayley method and those found in literature.}
\end{table}
Again the values are not the same for the individual NLEs, but the sum is accurate. For both sets of parameter values, the individual NLEs are computed in good agreement with those found in literature.
\subsection{Stochastic Cases}
Here the Lorenz system is studied with SALT. The noise amplitude is chosen to be $\beta = 0.5$. In figure \ref{saltl63figure} it can be seen that the stochastic dynamics give rise to a perturbed butterfly shaped object in phase space. The numerical Lyapunov exponents converge. The $x$-axis in the convergence plot of the NLEs shows time, which is simply the number of iterations multiplied by the time step. The sum of the individual NLEs evaluates to -13.6665, which is the value produced by the deterministic algorithm as well. This agrees with the analysis done in section \ref{LyapExp}.
The individual NLEs for both stochastic versions of the Lorenz 63 system behave very similarly when the noise amplitude is increased. In both cases, the top two exponents decrease when the noise amplitude increases, whereas the bottom exponent increases. For the system with SALT, the bottom exponent increases faster than in the fluctuation-dissipation system. Thus the sum of the NLEs is independent of the noise, while the individual NLEs are not. Analytical results for the individual NLEs are difficult to establish and often only estimates are possible. We therefore restrict ourselves here to studying the value of the sum of the exponents, rather than the individual NLEs.
\begin{figure}[H]
\centering
\includegraphics[width = \textwidth]{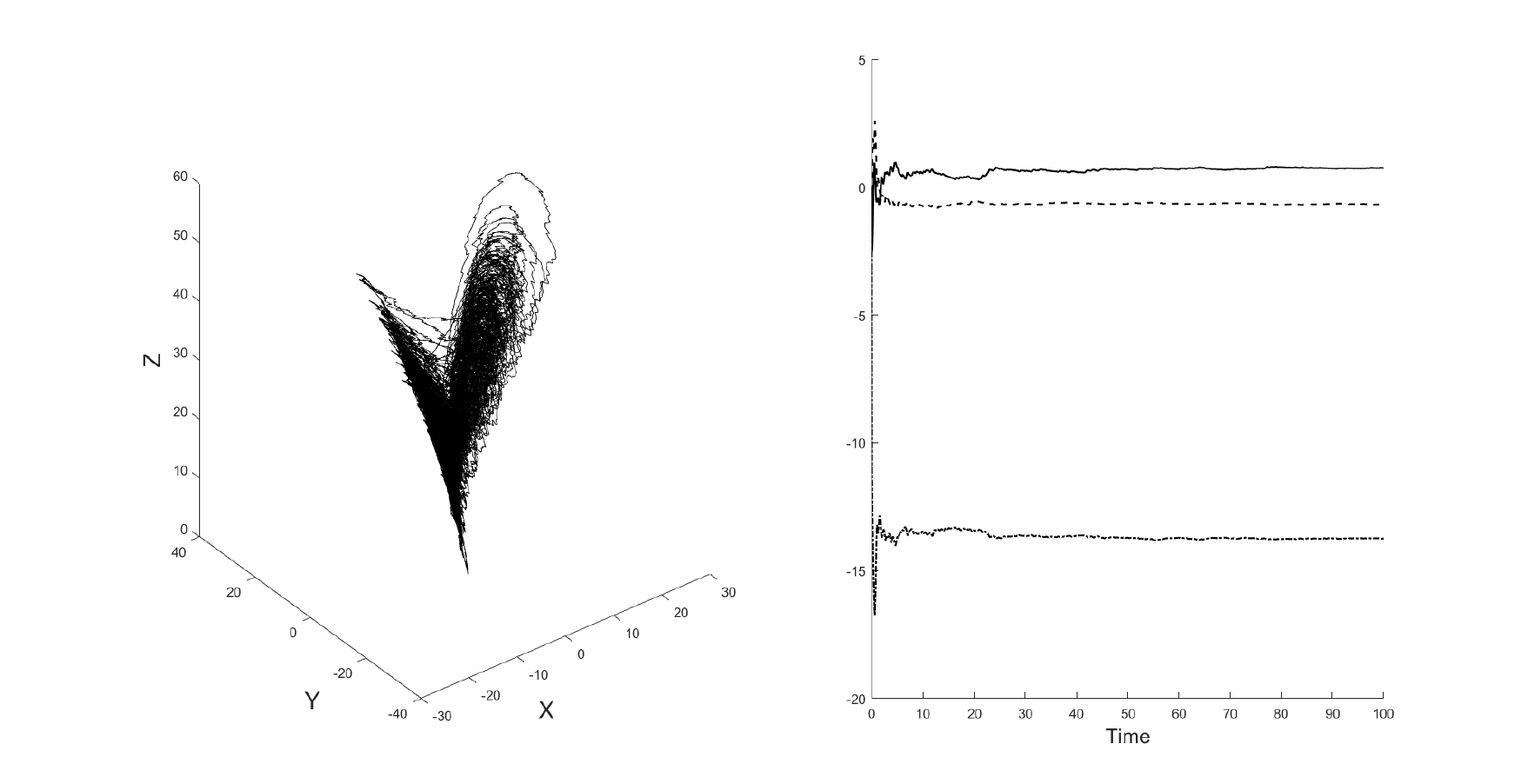}
\caption{The Lorenz system with SALT with noise amplitude $\beta = 0.5$. The left figure shows a single realisation of the stochastic dynamics. The right figure shows the convergence of the individual numerical Lyapunov exponents.}
\label{saltl63figure}
\end{figure}
The Lorenz system with fluctuation-dissipation noise, with noise amplitude $\beta = 0.5$, gives rise to the plots shown in figure \ref{fdl63figure}. The NLEs converge and their sum for this particular realisation is -13.7636. This is a discrepancy in the first decimal place compared to both the deterministic and the SALT case, which agree.
\begin{figure}[H]
\centering
\includegraphics[width = \textwidth]{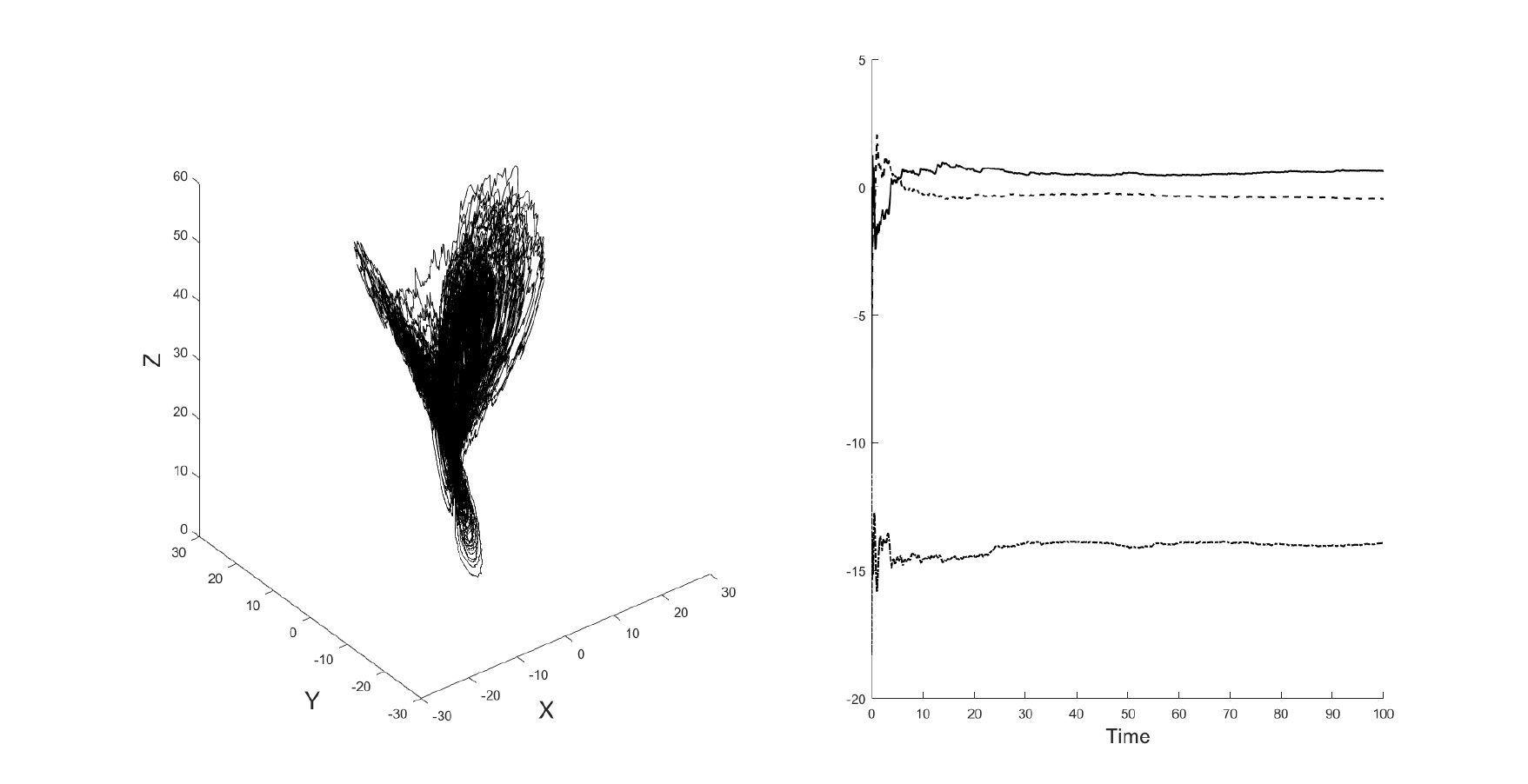}
\caption{The Lorenz system with fluctuation-dissipation noise with noise amplitude $\beta = 0.5$. The left figure shows a single realisation of the stochastic dynamics. The right figure shows the convergence of the individual numerical Lyapunov exponents.}
\label{fdl63figure}
\end{figure}
From figures \ref{saltl63figure} and \ref{fdl63figure} it can be seen in the left panels that both systems give rise to a locus of phase space trajectories that vaguely resemble a butterfly. The realisation of the Wiener process is the same for both figures. It can be seen that the two types of noise give rise to different trajectories in phase space. The right panels show that for both systems the NLEs have converged. The constancy of the sum for SALT becomes especially clear in figure \ref{sumplot}, which shows a hundred computations for increasing noise amplitude. Each time the noise amplitude is increased, a new realisation of the Wiener process is used. The solid black line is the sum of the NLEs for fluctuation-dissipation noise, which depends on each realisation, whereas the dashed line, which shows the sum of the NLEs for SALT, is constant at -13.6665 and independent of noise amplitude and realisation of the Wiener process. For a fixed realisation of the Wiener process, the theory predicts a linear behaviour. This can be seen in figure \ref{sumplotlinear}.
\begin{figure}[H]
\centering
\includegraphics[angle = 270, width = .8\textwidth]{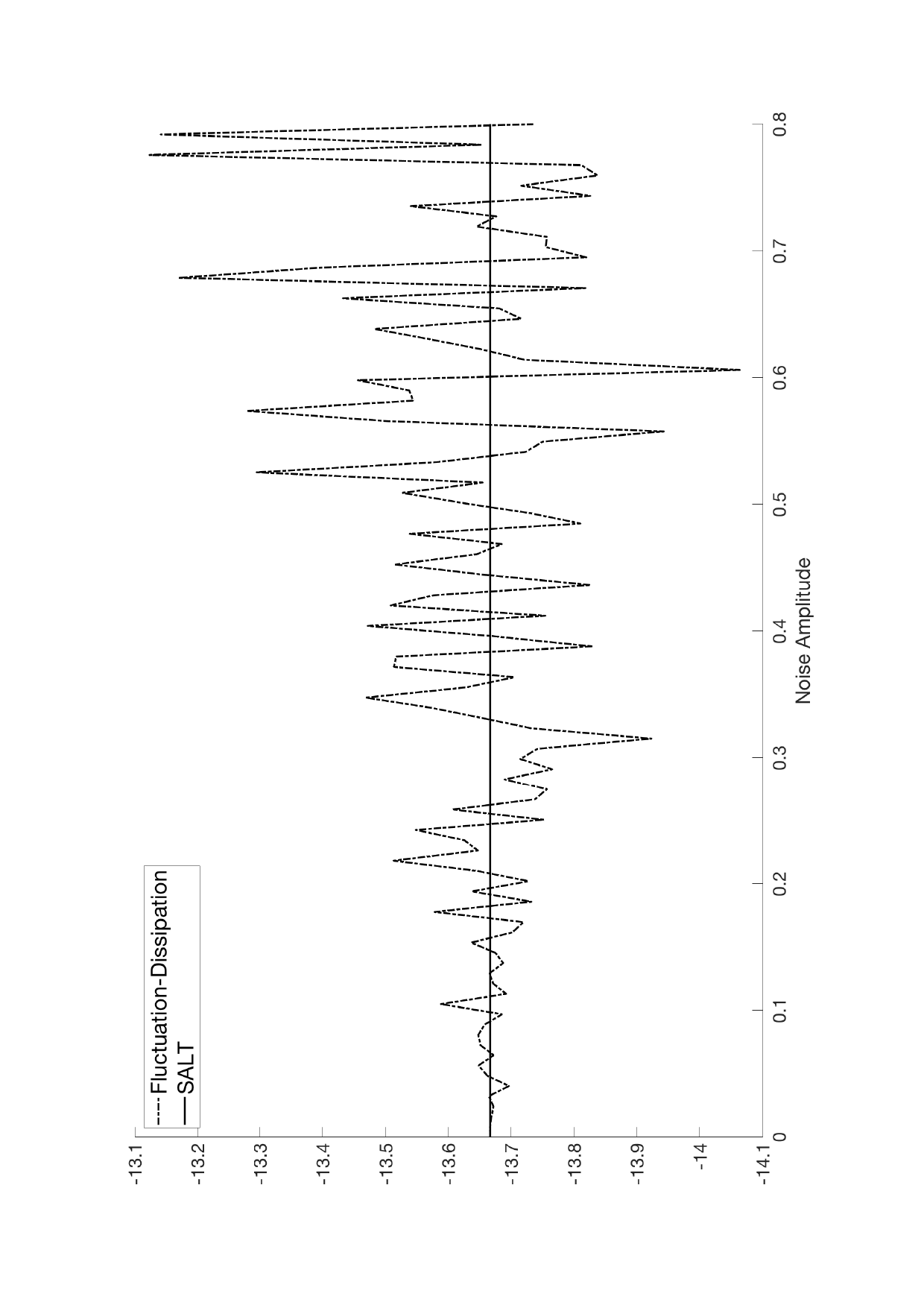}
\caption{The sum of NLEs for the two different types of stochasticity. Each increase in noise amplitude corresponds to a new realisation of the Wiener process. The sum of the NLEs for SALT is constant and for fluctuation-dissipation noise it varies for each realisation.}
\label{sumplot}
\end{figure}
\begin{figure}[H]
\centering
\includegraphics[angle = 270, width = .8\textwidth]{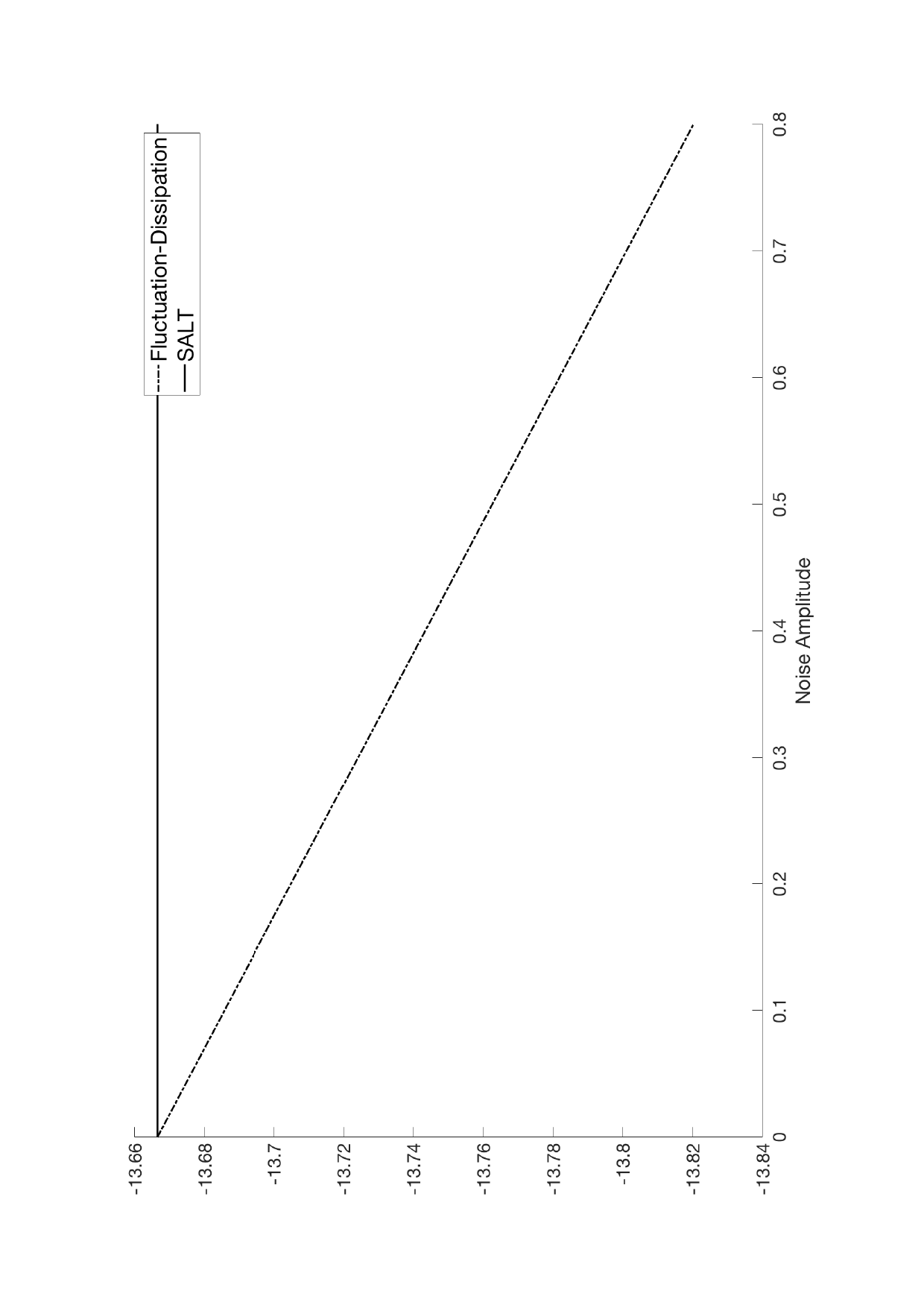}
\caption{The sum of NLEs for the two different types of stochasticity for increasing noise amplitude. The realisation of the Wiener process is fixed throughout the computation. The sum of NLEs for SALT is constant and for fluctuation-dissipation noise it decreases linearly with increasing noise amplitude.}
\label{sumplotlinear}
\end{figure} 
The individual Lyapunov exponents for systems \eqref{SALTL63} and \eqref{ChekrounL63} are almost identical for each realisation, although the sums are different, since the phase-space volume contraction rate is preserved for the system \eqref{SALTL63}.

\section{Conclusion}\label{sec: conclude}
We used the Kelvin circulation theorem for the ideal Oberbeck-Boussinesq equations to modify the equation of motion and the advection equation for the heat so as to include stochastic advection by Lie transport (SALT). We then added viscosity in the motion equation and heat diffusivity in the advection equation to obtain the Oberbeck-Boussinesq equations with SALT. By using a specific truncated Fourier series expansion of these equations, the corresponding Lorenz 63 system with SALT was obtained in equation set \eqref{SALTL63ito}. This low-dimensional system of stochastic differential equations was then compared with a Lorenz 63 system \eqref{ChekrounL63}, which was perturbed using fluctuation-dissipation noise (linear multiplicative noise in each variable), which has been previously studied in \cite{chekroun2011stochastic}. 
\bigskip

By applying methods from the theory of random dynamical systems, we were able to show that the two types of systems \eqref{SALTL63ito} and \eqref{ChekrounL63} have rather different qualitative properties. In particular the linear multiplicative noise in each variable of the fluctuation-dissipation system changes the average rate of contraction or expansion of phase space volume, relative to the deterministic system; whereas the system with SALT conserves this rate. This means that the type of noise introduced into low-dimensional dynamical systems can affect properties of the underlying deterministic system, so one should examine the effects of stochasticity on a qualitative level. For example, when a system of deterministic equations is Hamiltonian, introducing arbitrary noise would destroy the Hamiltonian structure by altering the average rate of phase space volume contraction, which for Hamiltonian systems is exactly zero. SALT would preserve this property. See \cite{holm2015variational, arnaudon2015noise, holm2016variational, holm2017uncertainty, cotter2017stochastic, arnaudon2018noise, arnaudon2018stochastic, cruzeiro2018momentum, holm2018stochastic, crisan2019solution}.
\bigskip

For the numerical verification of our analytical results for the stochastic Lorenz systems, we introduced a stochastic generalisation of the Cayley method for the computation of numerical Lyapunov exponents (NLEs). This method is a QR-based algorithm in which the orthogonal matrix is determined via the Cayley transform. The method turned out to be robust and stable for the stochastic case. Improvements to the numerical calculations are possible by using better numerical methods. However, the numerical results we obtained here agree with our analytical results and the individual NLEs for the deterministic Lorenz system agree reasonably well with those found in literature. In the stochastically perturbed Lorenz systems, the numerical method is able to distinguish clearly between the two types of noise.

\section{Outlook for a new climate science paradigm}\label{sec: outlook}
In this paper we have investigated the effect of Stochastic Advection by Lie Tranport (SALT) on Rayleigh-B\'enard convection and how the underlying Lorenz 63 system changes when the SALT approach is applied. In a celebrated unpublished paper \cite{lorenz1995climate} Ed Lorenz defined the statistical approach to climate science by the adage, ``Climate is what you expect, weather is what you get." That is, climate science is fundamentally probabilistic. This realisation motivates the use of so-called Lagrangian Averaged SALT (LA-SALT) approach, first introduced in \cite{drivas2018circulation} and developed further in \cite{drivas2019lagrangian} and \cite{alonso2019modelling}. Briefly put, the LA-SALT approach alters the stochastic material velocity $\widehat{\textbf{u}}$ of the circulation loop in Kelvin's theorem for SALT in equation \eqref{SALTkelvinthm} by replacing the drift velocity $\textbf{u}$ by its expected value $\mathbb{E}[\textbf{u}]$. Namely,
\begin{equation}
\widehat{\textbf{u}}\rightarrow {\sf \textcolor{red} d}\widehat{\mathbf{y}_t} = \mathbb{E}[\textbf{u}]\,dt + \sum_i {\boldsymbol \xi}_i\circ dW_t^i\,,
\label{lasaltkelvin}
\end{equation}
where $\mathbb{E}[\,\cdot\,]$ denotes expectation. 
The altered velocity of the circulation loop in \eqref{lasaltkelvin} allows one to take the expectation of the entire Kelvin circulation theorem. Consequently, the equations governing the expected solution comprise a deterministic subsystem of the stochastic Kelvin circulation theorem for the stochastic velocity vector field in \eqref{lasaltkelvin}. By following the approach illustrated in the present paper from equation \eqref{SOB} onward in the present paper, one may derive the LA-SALT version of the Lorenz 63 system. Perhaps not surprisingly, after replacing the transport drift velocity $X$ in the nonlinear terms in equations \eqref{formalSALTL63} by its expectation $\mathbb{E}[X]$ these equations become
\begin{equation}
\begin{aligned}
{\rm \textcolor{red}{d}}X &= \sigma(Y-X)d\tau,\\
{\rm \textcolor{red}{d}}Y &= \left(rX-\mathbb{E}[X]Z-\left(1+\frac{\beta^2}{2}\right)Y\right)d\tau - \beta Z dW_\tau,\\
{\rm \textcolor{red}{d}}Z &= \left(\mathbb{E}[X]Y - \left(b+\frac{\beta^2}{2}\right)Z\right)d\tau + \beta Y dW_\tau.
\end{aligned}
\label{SALTL63lasalt}
\end{equation}
Taking the expectation of each of the equations in \eqref{SALTL63lasalt} then yields a deterministic dynamical system for the expected solutions, 
\begin{equation}
\begin{aligned}
\frac{d}{d\tau}\mathbb{E}[X] &= \sigma\big(\mathbb{E}[Y]-\mathbb{E}[X]\big)\,,\\
\frac{d}{d\tau}\mathbb{E}[Y] &= r\mathbb{E}[X]-\mathbb{E}[X]\mathbb{E}[Z]-\left(1+\frac{\beta^2}{2}\right)\mathbb{E}[Y]\,,\\
\frac{d}{d\tau}\mathbb{E}[Z] &= \mathbb{E}[X]\mathbb{E}[Y] - \left(b+\frac{\beta^2}{2}\right)\mathbb{E}[Z]
\,.
\end{aligned}
\label{SALTL63lasalt2}
\end{equation}
The system \eqref{SALTL63lasalt2} is isomorphic to the deterministic Lorenz 63 system with a slightly renormalised dissipation. Hence this system has a strange attractor and one can make the following analogy to Lorenz's quote. The Lorenz 63 system with SALT in \eqref{SALTL63} is like the weather. (It's what you get.) and the Lorenz 63 system with LA-SALT in \eqref{SALTL63lasalt2} is like the climate. (It's what you expect.) Since the LA-SALT approach recovers the deterministic Lorenz 63 equations as a subsystem, all of the dynamical systems analysis which has been done for the deterministic Lorenz 63 system previously now carries over to the  LA-SALT expectation equations in \eqref{SALTL63lasalt2}. 
\bigskip

Finally, if one subtracts the deterministic expected LA-SALT equations in \eqref{SALTL63lasalt2} from the stochastic  LA-SALT equations in \eqref{SALTL63lasalt}, one finds a system of linear stochastic equations driven by the expected solutions of the system \eqref{SALTL63lasalt2} for the differences, 
\begin{equation}
X' = X - \mathbb{E}[X]\,,\quad Y' = Y - \mathbb{E}[Y]\,,\quad Z' = Z - \mathbb{E}[Z]\,,
\label{fluct-defs}
\end{equation}
regarded as stochastic fluctuations away from the Lorenz 63 system. Namely, 
\begin{equation}
\begin{aligned}
{\rm \textcolor{red}{d}}X' &= \sigma(Y'-X')d\tau,\\
{\rm \textcolor{red}{d}}Y' &= \left(rX'-\mathbb{E}[X]Z'-\left(1+\frac{\beta^2}{2}\right)Y'\right)d\tau 
- \beta (Z' +  \mathbb{E}[Z] )dW_\tau,\\
{\rm \textcolor{red}{d}}Z' &= \left(\mathbb{E}[X]Y' - \left(b+\frac{\beta^2}{2}\right)Z'\right)d\tau 
+ \beta \big(Y' +  \mathbb{E}[Y] \big)dW_\tau.
\end{aligned}
\label{SALTL63lasalt-fluct}
\end{equation}

We would like to dedicate future work to the further investigation of the systems in equations \eqref{SALTL63lasalt}--\eqref{SALTL63lasalt-fluct} and their implications for the differences between `what you expect' in \eqref{SALTL63lasalt2} and the dynamical evolution of the statistics of `what you get' in \eqref{SALTL63lasalt-fluct}. In particular, the accuracy of the prediction of `what you expect' in the chaotic solutions on the strange attractor of the equation \eqref{SALTL63lasalt2} depends on the amplitude of the noise representing `what you don't know' in \eqref{lasaltkelvin}. Moreover, the time-dependent variances $\mathbb{E}[(X')^2]$, $\mathbb{E}[(Y')^2]$ and $\mathbb{E}[(Z')^2]$ of `what you get' derived from the equation set \eqref{SALTL63lasalt-fluct} depend on the amplitude of the noise in \eqref{lasaltkelvin} and they are driven by the chaotic evolution of `what you expect' in \eqref{SALTL63lasalt2}. Even with the simplifications that (i) the expectations of the solutions decouple as a closed nonlinear chaotic system isomorphic to the Lorenz 63 system in \eqref{Lorenz63}, and (ii) the stochastic evolution of the fluctuations in \eqref{fluct-defs} is linear, the dynamics of the statistics of the fluctuation will be intricate and challenging to characterise fully. 

\section*{Acknowledgements}
We are enormously grateful for many suggestions for improvements offered in encouraging discussions with M. Chekroun, J. de Cloet, D.O. Crisan, M. Engel, V. Lucarini, J.C. McWilliams, E. M\'emin, M. Rasmussen, V. Resseguier and   S. Takao. The work of DDH was partially supported by EPSRC Standard Grant [grant number EP/N023781/1] and EL was supported by [grant number EP/L016613/1]. EL is grateful for the warm hospitality at the Imperial College London EPSRC Centre for Doctoral Training in Mathematics of Planet Earth (\url{mpecdt.org}) during the course of this work. 
 
\bibliographystyle{alpha}
\bibliography{bibliography}
\end{document}